\documentclass[oneside,reqno]{amsart}
\usepackage[bottom=0.5in]{geometry}
\setlength{\textheight}{53pc}
\setlength{\textwidth}{13.5cm}
\usepackage{graphicx}%
\usepackage{multirow}%
\usepackage{amsmath,amssymb,amsfonts}%
\usepackage{amsthm}%
\usepackage{newtxmath}
\usepackage{fix-cm}
\usepackage[title]{appendix}%
\usepackage{xcolor}%
\usepackage{textcomp}%
\usepackage{manyfoot}%
\usepackage{booktabs}%
\usepackage{algorithm}%
\usepackage{algorithmicx}%
\usepackage{algpseudocode}%
\usepackage{listings}%
\usepackage{lmodern}
\usepackage{thmtools}
\usepackage{esint}

\usepackage{breqn}
\usepackage{bbm}
\usepackage{setspace}
\usepackage[T1]{fontenc}
\usepackage{BOONDOX-uprscr}
\usepackage[english]{babel}
\usepackage[utf8]{inputenc}
\usepackage{csquotes}
\usepackage{mathtools}
\usepackage{hyperref}
\hypersetup{
    colorlinks=true,
    linkcolor=black,
    filecolor=magenta,
    urlcolor=cyan,
}
\usepackage[backend=biber]{biblatex}
\addbibresource{bibliography.bib}
\AtBeginDocument{%
  %
  %
  %
  \makeatletter
  \let\oldautoref\autoref
  \renewcommand{\autoref}[1]{\textit{\oldautoref{#1}}}%
  \makeatother
}

\allowdisplaybreaks
\DeclareMathAlphabet{\mathpzc}{OT1}{pzc}{m}{it}
\DeclarePairedDelimiter\floor{\lfloor}{\rfloor}

\newtheorem{lemma}{Lemma}
\newtheorem{thm}{\bf Theorem}

\newtheorem*{thm1.1}{\bf Theorem 1.1}
\newtheorem*{thm1.2}{\bf Theorem 1.2 (Unconditional)}
\newtheorem*{lem5.3}{\bf Lemma 5.3}

\theoremstyle{definition}

\theoremstyle{remark}
\newtheorem{remark}{ \bf Remark}
\newcommand{\dblprime}{^{\prime\prime}}
\numberwithin{equation}{section}
 \newcommand{\starsum}{\sideset{}{^\star}\sum}
 \newcommand{\lr}[1]{\left(#1\right)}
\newcommand{\ol}{\overline}
\newcommand{\GL}{\mathrm{GL}}
\newcommand{\SL}{\mathrm{SL}}

\begin{document}
\title[]{Sum of the $GL(3)$ Fourier coefficients over quadratics}

\author[1]{Himanshi Chanana}

\author[2]{Saurabh Kumar Singh}

\email{hchanana20@gmail.com,  skumar.bhu12@gmail.com}

\subjclass[2020]{11A25, 11F30, 11N37}

\address{ Department of Mathematics and Statistics,
Indian Institute of Technology Kanpur, 
Kalyanpur, Kanpur Nagar-208016, India.} 






\keywords{Hecke eigenform, Circle method, Ternary divisor function, Voronoi summation formula, Quadratic form.}
\begin{abstract}
Let $\mathcal{A}(n)$ denote the $(1,n)\text{-th}$ Fourier coefficient of a  $\SL(3, \mathbb{Z})$ Hecke eigenform or the ternary divisor function $d_3(n)$. Let $Q(x,y)$ be a symmetric positive definite quadratic form. This article establishes an asymptotic formula with a power-saving error term for the following sum
\begin{equation*}
    \sum_{1 \leqslant m \leqslant X} \sum_{1 \leqslant n\leqslant Y}  \mathcal{A}(Q(m,n)), 
\end{equation*}
where $X>1$ and $Y\leqslant X$.
\end{abstract}

\maketitle

\section{Introduction}
In analytic number theory, it is a classical problem to study the hidden structures underlying the Fourier coefficients of an automorphic form. An effective way is to consider its summatory function over a certain sequence. The order of magnitude of the individual values of the $n$-th Fourier coefficient fluctuates irregularly with $n$. However, the behavior of the corresponding summatory function is significantly more regular. The sums of this shape have been considered by several researchers in the last few decades. These studies aim to explore the underlying regularities and provide insights into the properties of arithmetic sequences. More precisely, they have considered the estimation of the sum
\begin{equation} \label{a(n)b(n)}
    \sum_{n \leqslant X} \mathsf{a}(n) \mathsf{b}(n),
\end{equation}
for various natural arithmetic sequences $\mathsf{a}(n)$ and $\mathsf{b}(n)$. For an interesting arithmetic function $\mathsf{a}(n)$, consider $\mathsf{b}(n)$ as a sparse sequence, such as a polynomial sequence. Even for linear polynomials the corresponding sum 
\begin{equation*}
    \sum_{\substack{n \leqslant X \\ n \equiv \alpha \textrm{ mod } q}} \mathsf{a}(n),
\end{equation*}
is quite difficult to estimate when the modulus $q$ is large relative to $X$, see for example \cite{FI}, where $\mathsf{a}(n)$ is the generalized divisor function. For $k \geqslant 2$, the generalized divisor functions, $d_k(n):= \# \{ (n_1,n_2,...,n_k) \in \mathbb{Z}_+^{k}: n=n_1 \cdots n_k\}$ have a long history and have recently played a significant role in the study of sums of arithmetic functions. In a sense, they are the most basic such functions and give the simplest model appearing in the context of $\GL(k)$ and having generating Dirichlet series $\zeta^k(s)$. For higher-degree polynomials, the problem becomes increasingly challenging and often requires very heavy machinery. Such issues arise also when considering, for example, sums of the following form
\begin{equation*}
    \sum_{n \leqslant X} d_k(n)d_k(n+\ell) \ \textrm{ or } \  \sum_{n \leqslant X} d_k(n^2 + n\ell),
\end{equation*}
which are useful in proving bounds for the moments of L-functions and in establishing non-vanishing results. Some of the classical results for the divisor function are \cite{Erdos, Ingham, Hooley1, Hooley2, Bykovskii}. For more general Fourier coefficients associated to $\GL(2)$ forms, see \cite{Blomer} and \cite{Templier}. The results of Blomer and Templier were further improved by Templier and Tsimerman in \cite{TT}. For coefficients associated with higher rank forms, Friedlander and Iwaniec were the first to study $d_3$ over $x^2+y^6$ \cite{FId3}. When we fix $\mathsf{a}(n)$ to be $d_k(n)$, the sums of the form \autoref{a(n)b(n)} arise as a tool in the study of the distribution of primes.

\medskip

\noindent This article deals with a problem of similar type; when $k=3$, $\mathsf{a}(n)$ is the $n$-th Fourier coefficient of general $\GL(3)$ form, and $\mathsf{b}(n)$ is the number of representations of the integer $n$ by an integral symmetric positive definite quadratic form $Q$ i.e., $n=Q(n_1,n_2)$ for $n_1, n_2 \in \mathbb{Z}$. This also improves and generalizes our previous work \cite{HC} in which we have proved a non-trivial bound for the mean of the Fourier coefficient of $\SL(3,\mathbb{Z})$ Hecke-Maass cusp form over the form $Q$. 

\smallskip
\noindent Precisely to understand the sum under consideration. Let $\phi$ be a Maass form of type $(\nu_1,\nu_2) \in \mathbb{C}^2$ for the group $\SL(3,\mathbb{Z})$, and let $A_\phi(m,n) \in \mathbb{C}$, denote the $(m,n)$-th normalized Fourier coefficient of $\phi$. Let $Q(x,y) = A x^2 + C xy + B y^2 \in \mathbb{Z}[x,y]$ be a symmetric positive definite quadratic form. In \cite{HC} we proved the following: for any $\varepsilon>0$ and $X>1$;
\begin{equation} \label{HC JNT}
    \sum_{n_1 \in \mathbb{Z}} \sum_{n_2 \in \mathbb{Z}} A_\phi(1,Q(n_1,n_2)) W_1\lr{\frac{n_1}{X}} W_2\lr{\frac{n_2}{X}} \ll X^{2-1/68+\varepsilon},
\end{equation}
where $W_1, W_2$ are smooth bump functions. The implied constant above depends on the form $\phi$ and $\varepsilon$. Let $\mathcal{A}(n)$ be either $A_\phi(1,n)$ or $d_3(n)$, and $Y \leqslant X$. Consider the sum
\begin{equation} \label{S(X,Y)}
    \mathcal{S}(X,Y):= \sum_{X \leqslant n_1\leqslant 2X} \sum_{Y \leqslant n_2 \leqslant 2Y} \mathcal{A}(Q(n_1,n_2)).
\end{equation}
As our primary result, we establish the following asymptotic formula with a power-saving error term.

\medskip

\begin{thm} \label{d3 thm}
    Let $X>1$, $1 \leqslant R < Y$, and $X^{3/4}R^2 < Y \leqslant X$. Let $W_i$, for $i=1,2$ be smooth functions supported on $[1,2]$ satisfying $x^k W_i^{(k)}(x) \ll R^k$, for all $k \geqslant 0$, and $Q(x,y)$ be a symmetric positive definite quadratic form over integers. Then for any $\varepsilon>0$, we have
    \begin{multline*}
        \sum_{n_1 \in \mathbb{Z}} \sum_{n_2 \in \mathbb{Z}} d_3(Q(n_1,n_2)) W_1\left( \frac{n_1}{X} \right) W_2\left( \frac{n_2}{Y} \right) \\ 
        = 2XY(\log{X})^2 \mathcal{C}_0 \mathcal{J}_0 + XY \log{X} (\mathcal{C}_1 \mathcal{J}_0 + 2 \mathcal{C}_0 \mathcal{J}_1) + \frac{1}{2}XY (\mathcal{C}_2\mathcal{J}_0 + \mathcal{C}_1\mathcal{J}_1 + \mathcal{C}_0 \mathcal{J}_2) \\ + \mathcal{O}_\varepsilon(X^{2-1/4+\varepsilon} R^2).
    \end{multline*}
    Here for $j=0,1,2$; $\mathcal{C}_j$'s are the singular series given by
    \begin{align*}
    \mathcal{C}_j &:= \sum_{q =1}^\infty \frac{1}{q^4} \sum_{n \mid q} n d(n) P_j(n,q)  \starsum_{a \textrm{ mod }q} \sum_{\alpha_1 \textrm{mod }q} \sum_{\alpha_2 \textrm{mod }q} e\left(\frac{-aQ(\alpha_1, \alpha_2)}{q}\right) S\lr{\ol{a},0;\frac{q}{n}} , 
\end{align*}
and $\mathcal{J}_j$'s are the singular integrals defined as follows
\begin{equation*}
    \mathcal{J}_j := \int_{\mathbb{R}} \int_{1/2}^{5/2} (\log{y})^j e\lr{xy} dy \iint_{\mathbb{R}^2} W_1(u) W_2(v) e\left(-xQ(u,v) \right) du dv dx.
\end{equation*}
\end{thm}

\medskip

\noindent Our next result is an immediate corollary to \autoref{d3 thm}, which gives us an asymptotic formula for the corresponding sharp cut-off. 

\medskip

\begin{thm} \label{sharp d3}
    Let $X>1$, $X^{3/4}<Y \leqslant X$, and $Q(x,y)$ be as defined in \autoref{d3 thm}. For any $\varepsilon>0$, we have
    \begin{align*}
        \sum_{X \leqslant n_1 \leqslant 2X} \sum_{Y \leqslant n_2 \leqslant 2Y} d_3(Q(n_1,n_2)) &= 2XY(\log{X})^2 \mathcal{C}_0 \mathcal{J}_0 + XY \log{X} (\mathcal{C}_1 \mathcal{J}_0 + 2 \mathcal{C}_0 \mathcal{J}_1) \\
        &+ \frac{1}{2}XY (\mathcal{C}_2\mathcal{J}_0 + \mathcal{C}_1\mathcal{J}_1 + \mathcal{C}_0 \mathcal{J}_2) + \mathcal{O}_\varepsilon(X^{1+1/4+\varepsilon}Y^{2/3}),
    \end{align*}
    where for $j=0,1,2$; $\mathcal{C}_j$'s and $\mathcal{J}_j$'s are defined as in \autoref{d3 thm}.
\end{thm}

\begin{remark}
    For the case of the divisor function, the analogous results as in \autoref{sharp d3} were proved by Gafurov \cite{Gafurov} for $Q(x,y)=x^2+y^2$ with an error term $\mathcal{O}(X^{5/3}\log^9{X})$. Subsequently, Yu \cite{Yu} refined this error term to $\mathcal{O}_{\varepsilon}(X^{3/2+\varepsilon})$. Recently, Dai \cite{Dai} generalizes Yu's result for $Q(x,y)=x^2+Ny^2$, with an error term of the same strength.
\end{remark}

\medskip

\begin{remark}
    An analogous asymptotic formula can be derived for the slightly modified sum
    \begin{equation*}
        \sum_{X \leqslant n_1^2 + n_2^2 \leqslant 2X} d_3(n_1^2 + n_2^2). 
    \end{equation*}
    This sum is simpler than the sum in \autoref{sharp d3} since
    \begin{equation*}
        \sum_{X \leqslant n_1^2 + n_2^2 \leqslant 2X} d_3(n_1^2 + n_2^2) = \sum_{X \leqslant n \leqslant 2X} d_3(n) r_2(n),
    \end{equation*}
    where $r_2(n) = \# \{ (n_1,n_2) \in \mathbb{Z}^2 : n_1^2 + n_2^2 =n \}$. 
\end{remark}

\medskip
\noindent The following two results are analogous to \autoref{d3 thm} and \autoref{sharp d3} in the case of Fourier coefficients of Hecke-Maass forms.

\medskip
\begin{thm} \label{GL3 FC}
    Let $X>1$ and $X^{3/4}R^2 < Y\leqslant X$. Let $A_\phi(1,n)$ denote the normalized $(1,n)$-th Fourier coefficient of a Hecke-Maass form $\phi$ for the group $\SL(3, \mathbb{Z})$, and $Q(x,y)$ be as defined in \autoref{d3 thm}. Then for any $\varepsilon>0$, we have 
\begin{equation*}
  \sum_{n_1 \in \mathbb{Z}} \sum_{n_2 \in \mathbb{Z}}  A_\phi(1,Q(n_1, n_2)) W_1\left( \frac{n_1}{X} \right) W_2\left( \frac{n_2}{Y} \right) \ll_{\phi, \varepsilon} X^{2-\frac{1}{4}+ \varepsilon}R^2,
\end{equation*}
where $W_1$ and $W_2$ are smooth bump functions supported on the  interval $[1, 2]$ as defined in \autoref{d3 thm}. 
\end{thm}

\medskip

\noindent Taking $R=1$ gives us an extra savings of $16/68$ over our previous result in \autoref{HC JNT}. For the sharp cut-off, we have the following bound.

\medskip

\begin{thm} \label{sharp FC}
    Let $A_\phi(1,n)$, and $ Q(x,y)$ be defined as in \autoref{GL3 FC}. Then for any $\varepsilon>0$, we have 
\begin{equation*}
  \sum_{X \leqslant n_1 \leqslant 2X} \sum_{Y \leqslant n_2 \leqslant 2Y}  A_\phi(1,Q(n_1, n_2)) \ll_{\phi, \varepsilon} X^{1+\frac{1}{4}+ \varepsilon} Y^{2/3},
\end{equation*} 
which is non-trivial provided $X^{3/4} < Y \leqslant X$.
\end{thm}

\begin{remark}
    We work with the coefficients $A_\phi(1,n)$ since they occur in the definition of the standard $L$-function 
$L(s,\phi) = \sum_{n\geq 1} A_\phi(1,n)n^{-s}$ attached to a $\SL(3,\mathbb{Z})$ Maass cusp form $\phi$. 
The coefficients $A_\phi(n,1)$, in contrast, correspond to those of the dual form $\tilde{\phi}$.
\end{remark}

\subsection*{\bf Idea behind the proof}
In this article, we apply the version of the `delta method' due to Duke, Friedlander, and Iwaniec given in  \autoref{circle method}. One of the advantages here is that this variant given in \autoref{delta} enables us to control the variation of the weight function in both $q$ (moduli) and $n$ variables, and they can be separated at negligible cost (see \cite{HC} for the Heath-Brown variant of the delta method). The weight function is now expressed as the Fourier inversion of the earlier form, allowing it to be conveniently represented in terms of the additive character $e(nu)$ with $|u|$ being small. After detecting $r=Q(n_1,n_2)$ using this delta method, the expression for $\mathcal{S}(X,Y)$ becomes
\begin{equation*}
    \mathcal{S}(X,Y) \approx \sum_{q \sim \mathcal{Q}} \ \starsum_{a \textrm{ mod } q} \sum_{r \sim X^2} \mathcal{A}(r) e\lr{\frac{ar}{q}} \sum_{n_1 \sim X} \sum_{n_2 \sim Y} e\lr{\frac{-aQ(n_1,n_2)}{q}},
\end{equation*}
where the $n_1$ and $n_2$ sums have analytic oscillations of size $R$, and $\mathcal{Q}=X$. Next, we apply the Voronoi summation formula to the $r$ sum, followed by the Poisson summation formula to $n_1$ and $n_2$ sums. In the $r$ sum, we save $X^2/\mathcal{Q}^{3/2}= X^{1/2}$. In the $n_1$ and $n_2$ sums, we save $X/\sqrt{\mathcal{Q}}R$ and $Y/\sqrt{\mathcal{Q}}R$, respectively. The additional $1/\sqrt{R}$ factor arises as we do not save from the integrals that come from applying Poisson. For details, see \autoref{voronoi} and \autoref{poisson section}. With this, our expression takes the following form.
\begin{equation} \label{M+E sketch}
    \mathcal{S}(X,Y) \approx \mathcal{S}_M(X,Y) +  \sum_{q \sim \mathcal{Q}} \ \sum_{r^* \sim X} \frac{\mathcal{B}(r^*)}{r^{* 1/3}} \sum_{n_1^* \sim R} \sum_{n_2^* \sim \frac{RX}{Y}} \mathfrak{C}(n_1^*,n_2^*;q),
\end{equation}
where the character sum is given by
\begin{equation*}
    \mathfrak{C}(n_1^*, n_2^*;q) = \starsum_{a \textrm{ mod } q} \sum_{\alpha_1 \textrm{ mod } q} \sum_{\alpha_2 \textrm{ mod } q} e\lr{\frac{-\ol{a}(Q(\alpha_1,\alpha_2) - n_1^*\alpha_1- n_2^* \alpha_2)}{q}} S(\ol{a},r^*;q),
\end{equation*}
$\mathcal{S}_M(X,Y)$ denotes the part of $\mathcal{S}(X,Y)$ which will contribute to the main term (when $n_1^*=n_2^*=0$) and $\mathcal{B}(n)$ is the dual Fourier coefficient. The $\mathcal{S}_M(X,Y)$ survives only in the case of \autoref{d3 thm} and \autoref{sharp d3}. For the exposition of the sketch, we will focus on the second term of \autoref{M+E sketch}; for details of the main term, see \autoref{main term section}. In the error part, additionally, we save $\sqrt{Q}$ from the sum over $a$, which gives us a total savings of $XY/R^2$. Hence, the bound now becomes $X^2 R^2$. To get the extra savings, earlier we were using the first derivative bound only to treat the integrals; in contrast, we have now done extensive analysis of integrals using the stationary phase method. At last, applying the partial summation formula, opening up the Kloosterman sum, and then using Miller's bound given in \autoref{additive twist} gives us the additional savings of $X^{1/4}$.

\medskip

\begin{remark}
    The usual procedure of applying Cauchy, then Poisson, will not work here, as we will not get enough savings in the off-diagonal case.
\end{remark}

\medskip

\noindent {\bf Notations.} Throughout the paper, the notation $a \ll A$ shall signify that, for any $\varepsilon> 0$ there exists a constant $c$ such that $|a| \leq cA X^\varepsilon$. The notation $ B \asymp C$ denotes that both $B \ll C$ and $C \ll B$ hold. Additionally, $D \sim E$, implies that $E\leq D \leq 2E$. The notation $ a \mid b^\infty$ means the prime factors of $ a$ are a subset of the prime factors of $ b$ for $a,b \in \mathbb{N}$. The symbol $\varepsilon$ represents a suitably small positive quantity that may vary at different instances and $e(x) = e^{2\pi i x}$. \\

\medskip
\noindent To establish our results, we first recall essential definitions and results, including the Voronoi summation formula and properties of Fourier coefficients.

\section{Preliminaries}
\subsection{\texorpdfstring{$\mathbf{\GL(3)}$}{} Voronoi summation} Let $\phi$ be a Maass form of type $\nu = (\nu_1, \nu_2) \in \mathbb{C}^2$ for the group $\SL(3,\mathbb{Z})$ such that $\phi$ is an eigenfunction of all the Hecke operators $T_n \ (n \in \mathbb{N})$ with eigenvalues $A_\phi(n,1) \in \mathbb{C}$, normalized so that $A_\phi(1,1) = 1$. \\

\noindent Let the Fourier-Whittaker expansion of $\phi(z)$ be given by 
\begin{align} \label{whittaker}
    \phi(z) &= \sum_{\gamma \in U_2(\mathbb{Z}) \symbol{92} \SL(2,\mathbb{Z}) } \;\; \sum_{m=1}^\infty \;\sum_{n \neq 0} \frac{A_\phi(m,n)}{|m n|} \notag\\ 
     &\hspace{2cm}\times \textbf{W}_{\text{Jacquet}} \left( \begin{pmatrix}
        |m n| & & \\ & m & \\ & & 1
    \end{pmatrix} \begin{pmatrix}
        \gamma & \\ & 1
    \end{pmatrix}z, \nu, \psi_{1, \frac{n}{|n|} } \right),
\end{align}
where $\textbf{W}$ is the Whittaker-Jacquet function (for more details, see Goldfeld's book \cite{Goldfeld}). We introduce the  Langlands parameters $(\alpha_1, \alpha_2, \alpha_3)$, which are defined by 
\begin{equation} \label{langland}
    \alpha_1= - \nu_1 -2\nu_2 + 1, \; \alpha_2 = -\nu_1 + \nu_2 \; \text{ and } \alpha_3 = 2 \nu_1 + \nu_2 - 1. 
\end{equation}
The Ramanujan-Selberg conjecture predicts that $|\text{Re}(\alpha_j)| = 0$, and, according to the work of Jacquet-Shalika, we know that $| \text{Re}( \alpha_j)| < \frac{1}{2}$. The following lemma provides the Ramanujan bound for $A_\phi(m,n)$ on average.

\medskip
\begin{lemma} \label{Ramanujan bound}
    Let $A_\phi(m,n)$ be as given in \autoref{whittaker}. We have
    \begin{equation}
        \mathop{\sum\sum}_{m^2 n \ll X} \; |A_\phi(m,n)|^2 \ll_\phi X^{1 + \varepsilon},
    \end{equation}
    where the implied constant depends on the form $\phi$ and $\varepsilon$.
\end{lemma}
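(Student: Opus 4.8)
The plan is to deduce this average Ramanujan bound from the analytic properties of the $GL(3)\times GL(3)$ Rankin–Selberg $L$-function $L(s,\phi\times\tilde\phi)$, where $\tilde\phi$ is the dual form. The starting point is the classical identity from the Rankin–Selberg theory for $SL(3,\mathbb{Z})$ (as developed, e.g., in Goldfeld's treatment of the $GL(3)$ convolution), which expresses the double Dirichlet series of the squared coefficients in terms of this $L$-function: up to a harmless factor, one has
\[
\sum_{m=1}^{\infty}\sum_{n=1}^{\infty}\frac{|\Lambda(m,n)|^2}{(m^2 n)^s}=\frac{L(s,\phi\times\tilde\phi)}{\zeta(3s)}.
\]
Since $\phi$ is cuspidal, $L(s,\phi\times\tilde\phi)$ extends to a meromorphic function on $\mathbb{C}$ whose only pole in the region $\Re(s)\geq 1$ is a simple pole at $s=1$, with residue proportional to the square of the Petersson norm of $\phi$; the factor $1/\zeta(3s)$ is holomorphic, finite, and nonzero in a neighbourhood of $\Re(s)=1$, since $\zeta(3)\neq 0$ and the pole of $\zeta$ sits at $s=1/3$. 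Consequently the double series converges absolutely for $\Re(s)>1$ and inherits a simple pole at $s=1$.

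Next I would pass from the Dirichlet series to the partial sum by a crude convexity estimate, which is all that is needed for the stated upper bound. Writing $N=m^2 n$ and $a_N=\sum_{m^2 n=N}|\Lambda(m,n)|^2\geq 0$, the series $D(s)=\sum_N a_N N^{-s}$ converges for $\Re(s)>1$ and satisfies $D(\sigma)\ll(\sigma-1)^{-1}$ as $\sigma\to 1^{+}$. Because the coefficients are nonnegative, for any $\sigma>1$,
\[
\sum_{N\leq X}a_N\leq X^{\sigma}\sum_{N}a_N N^{-\sigma}=X^{\sigma}D(\sigma)\ll\frac{X^{\sigma}}{\sigma-1}.
\]
Taking $\sigma=1+(\log X)^{-1}$ yields $\sum_{m^2 n\ll X}|\Lambda(m,n)|^2\ll X\log X\ll X^{1+\varepsilon}$, which is exactly the claimed estimate. (A Wiener–Ikehara Tauberian argument would even produce the sharper asymptotic $\sim cX$, but the elementary bound above suffices here and keeps the dependence on the spectral parameters transparent.)

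The hard part — indeed the only genuinely non-elementary ingredient — is the analytic continuation of $L(s,\phi\times\tilde\phi)$ together with the precise assertion that it has exactly a simple pole at $s=1$ and is holomorphic for $\Re(s)>1$. This is the content of the $GL(3)$ Rankin–Selberg theory and will be invoked as a known result rather than reproved; everything else reduces to formal manipulation of Dirichlet series and the one-line convexity estimate above. The one point requiring care is to confirm that the auxiliary $\zeta$-factor relating $D(s)$ to $L(s,\phi\times\tilde\phi)$ neither cancels the pole at $s=1$ nor introduces a new one on the line $\Re(s)=1$; since $\zeta(3s)$ is finite and nonvanishing throughout $\Re(s)\geq 1$, no such obstruction occurs, and the implied constant depends only on $\phi$ (through the residue) and on $\varepsilon$.
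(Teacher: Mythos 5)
Your argument is sound, but note that the paper does not actually prove this lemma: it is stated with a bare citation to Molteni \cite{GM}, so there is no in-paper proof to match step for step. What you have written out is essentially the classical argument underlying that citation: the $GL(3)\times GL(3)$ Rankin--Selberg identity $\sum_{m,n}|\Lambda(m,n)|^2(m^2n)^{-s}=L(s,\phi\times\tilde\phi)/\zeta(3s)$, the standard pole structure (simple pole at $s=1$, holomorphy elsewhere on $\Re(s)\geq 1$, due to Jacquet--Shalika and Moeglin--Waldspurger, or Goldfeld's Chapter 12 treatment for $SL(3,\mathbb{Z})$), and Rankin's trick with $\sigma=1+(\log X)^{-1}$ exploiting nonnegativity of the coefficients. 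Two small points deserve attention. First, to assert that the Dirichlet series converges for $\Re(s)>1$ you implicitly need Landau's theorem (a Dirichlet series with nonnegative coefficients has a singularity at its abscissa of convergence) together with some a priori polynomial bound on $|\Lambda(m,n)|$ guaranteeing convergence somewhere to the right; this is standard but should be said. Second, the genuine content of Molteni's paper is uniformity: his bound makes the implied constant polynomial in the analytic conductor, whereas your constant depends on the residue of $L(s,\phi\times\tilde\phi)$ at $s=1$ and hence on $\phi$ in an unspecified way. Since the lemma as stated explicitly allows the constant to depend on the form $\phi$, your conclusion fully suffices for the paper's purposes; what the citation buys beyond your argument is only the explicit dependence on the spectral parameters, which is not needed here.
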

\begin{proof}
    For the proof refer to \cite{Molteni}.
\end{proof}

\noindent The next lemma will be used quite often in the analysis.

\smallskip
\begin{lemma}\label{RB GL3}
We have
\begin{align*}
\mathop{\sum}_{ n \leqslant X} \vert A_\phi(m, n)\vert ^{2} \ll_{\phi} \, m^{2\theta + \epsilon}\, X^{1 + \epsilon}, 
\end{align*} where $ \theta $ is bound towards the Ramanujan conjecture. 
\end{lemma}
\begin{proof}
From the Hecke relation, we have
\begin{align*}
A_\phi(m, n) = \sum_{d | (m, n)}\,\mu(d) A_\phi\left(\frac{m}{d}, 1\right) A_\phi\left(1, \frac{n}{d}\right), 
\end{align*}
where $\mu(n)$ is the M\"obius function. By applying the Cauchy-Schwarz inequality, we obtain
\begin{align*}
     | A_\phi(m, n) |^2 & \leqslant \sum_{d | (m, n)} \left| \mu(d) A_\phi \left(\frac{m}{d}, 1\right)\right|^2  \sum_{d | (m, n)} \left| A_\phi\left(1, \frac{n}{d}\right) \right|^2  \\
     & \leqslant  \sum_{d | m } \left|  A_\phi\left(\frac{m}{d}, 1\right)\right|^2 \sum_{d |  n} \left| A_\phi\left(1, \frac{n}{d}\right) \right|^2. 
\end{align*}
Using the individual bound $A_\phi(m, n) \ll_{\epsilon} (mn)^{\theta + \epsilon}$ for $A_\phi({m}/{d}, 1)$, where $\theta \leq 5/14$ is the bound towards the Ramanujan conjecture on $\GL(3)$ (see \cite{Kim}), we get

\begin{align*}
\mathop{\sum}_{ n \leqslant X} \vert A_\phi(m, n)\vert ^{2} & \leqslant \mathop{\sum}_{ n \leqslant X}  \sum_{d | m } \left|  A_\phi\left(\frac{m}{d}, 1\right)\right|^2 \sum_{d |  n} \left| A_\phi\left(1, \frac{n}{d}\right) \right|^2 \\
& \ll m^{2 \theta } \sum_{d | m } \frac{1}{d^{2 \theta}} \mathop{\sum}_{ n \leqslant X}  \sum_{d |  n} \left| A_\phi\left(1, \frac{n}{d}\right) \right|^2 . 
\end{align*}
Replacing $ n$ by $ n_1 d$ in the last sum and using \autoref{Ramanujan bound} for the $n$-sum, we obtain
\begin{align*}
    \mathop{\sum}_{ n \leqslant X} \vert A_\phi(m, n)\vert ^{2} &  \ll m^{2 \theta + \epsilon  } \sum_{d \leqslant X} \mathop{\sum}_{ n_1 \leqslant X/d} \left| A_\phi\left(1, n_1 \right) \right|^2  \\ 
    & \ll_{\phi} \ m^{2 \theta + \epsilon  } \sum_{d \leqslant X} \frac{ X^{1 + \epsilon}}{ d}  \ll_{\phi} \  m^{2 \theta + \epsilon  } X^{1 + \epsilon}.
\end{align*}

\end{proof}

\noindent The following non-trivial bound for the additive twist of the Fourier coefficients was proved by Miller.

\medskip
\begin{lemma} \label{additive twist}
 
 Let $\phi$ be a Hecke eigenform on the group  $ \SL(3, \mathbb{Z}) $ with the Fourier coefficients $A_\phi(m, n)$.  For any $\alpha \in \mathbb{R}$, we have 
 
 \begin{equation}
  \sum_{n \leqslant X} A_\phi(m,n) e(n \alpha ) \ll X^{3/4 + \varepsilon}. 
 \end{equation}
 The implied constant depends only on $m$, $\varepsilon$ and $\phi$, and is independent of $\alpha$. 
  
 \end{lemma}
 \begin{proof}
  See Theorem $1.1$  of \cite{Miller}. 
 \end{proof}
 
\medskip
\noindent We now recall the Voronoi summation formula for $\GL(3)$ Hecke eigenvalues (refer \cite{MS}) and $d_3$ (refer \cite{XLI}), which will play a crucial role in our analysis. Let $g$ be a compactly supported function on $(0, \infty)$. The Mellin transform of $g$ is defined by $\Tilde{g}(s) = \int_{0}^{\infty} g(x) x^{s-1} dx$, where $s = \sigma + i t$. For $\sigma > -1 + \text{max} \{ -\text{Re}(\alpha_1), - \text{Re}(\alpha_2), - \text{Re}(\alpha_3)\} $ and $\ell = 0, 1$, we define 
\begin{equation} \label{G}
    G_\ell(y) = \frac{1}{2\pi i} \int_{(\sigma)} (\pi^3 y)^{-s} \prod_{j=1}^3 \frac{\Gamma\left(\frac{1+s+\alpha_j + \ell}{2}\right)}{\Gamma\left(\frac{-s-\alpha_j + \ell}{2}\right)} \Tilde{g}(-s) \;ds,
\end{equation}
where $\alpha_j$'s are Langlands parameters as specified in \autoref{langland}. We set
\begin{equation} \label{Gpm}
    G_{\pm}(y) = \frac{1}{2 \pi^{3/2}} \bigl(G_{0}(y) \mp i G_1(y)\bigr).
\end{equation}
The Kloosterman sum is defined by 
\begin{equation*}
    S(a,b;q) = \sideset{}{^\star}\sum_{x \textrm{ mod } \;q} e\left(\frac{ax + b \overline{x}}{q} \right),
\end{equation*}
where $\overline{x}$ denotes multiplicative inverse of $x$ modulo $q$. We now record the Voronoi summation formula for $\GL(3)$ forms.

\medskip

\begin{lemma} \label{voronoigl3}
    Let $g \in C_c^{\infty}(0, \infty)$. Let $A_\phi(m,n)$ be the $(m,n)$-th Fourier coefficient of a Maass form $\phi$ for $\SL(3,\mathbb{Z})$, we have
    \begin{multline}
        \sum_{n=1}^{\infty} A_\phi(m,n) e\left(\frac{an}{q}\right) g(n) = q\sum_{\pm} \sum_{n_1 \mid qm} \sum_{n_2 = 1}^{\infty} \frac{A_\phi(n_1,n_2)}{n_1 n_2} S\left(m\overline{a},\pm n_2; \frac{mq}{n_1}\right) \\ \times G_{\pm}\left(\frac{n_1^2n_2}{q^3 m}\right),
    \end{multline}
    where $(a,q) = 1$ and $a\overline{a} \equiv 1 \textrm{ mod }  q$.
\end{lemma}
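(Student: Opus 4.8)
The plan is to deduce the formula from Mellin inversion together with the functional equation of the additively twisted Dirichlet series attached to $\phi$. Since $g \in C_c^\infty(0,\infty)$, its Mellin transform $\tilde g(s)=\int_0^\infty g(x)x^{s-1}\,dx$ is entire and decays rapidly in vertical strips, so by Mellin inversion
\[
  \sum_{n=1}^\infty \Lambda(m,n)\, e\!\left(\tfrac{an}{q}\right) g(n) = \frac{1}{2\pi i}\int_{(\sigma)} \tilde g(s)\, D(s)\,ds, \qquad D(s):=\sum_{n=1}^\infty \frac{\Lambda(m,n)\, e(an/q)}{n^{s}},
\]
where $\sigma$ is taken large enough for absolute convergence, which is justified by the mean-square bound for $\Lambda(m,n)$ recalled above. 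Because $\phi$ is cuspidal, $D(s)$ continues to an entire function, so the only substantive tasks are to produce the functional equation of $D(s)$ and then to shift the contour.

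The heart of the matter is the functional equation of the ramified twist $D(s)$. First I would relate $D(s)$ to the multiplicatively twisted $L$-functions $L(s,\phi\otimes\chi)$: on the part of the sum with $(n,q)=1$ the additive character $e(an/q)$ opens as a Gauss-sum weighted combination of $\chi(n)$ over characters $\chi \bmod q$, while for $(n,q)=d>1$ one factors $n=dn'$ and iterates on the modulus $q/(q,d)$, which is what seeds the divisor structure. The $GL(3)$ functional equation for $L(s,\phi\otimes\chi)$ (cf. \cite{MS}, \cite{DG}) has archimedean factor exactly the product of gamma ratios in \eqref{G} with the Langlands parameters \eqref{langland}. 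Reassembling the Gauss sums over the characters, and invoking the Hecke multiplicativity of $\Lambda(m,n)$, collapses the character sum into the Kloosterman sums $S(m\overline a, \pm n_2; mq/n_1)$ with the constraint $n_1\mid qm$; the factor $m$ and the modulus $mq/n_1$ emerge from tracking the $m$-dependence of the conductor.

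With the functional equation in hand, I would substitute it into the contour integral and move the line of integration to the left across $\mathrm{Re}(s)=0$. Cuspidality guarantees that no poles are crossed, so no main term appears, consistent with the remark in the introduction. After the reflection $s\mapsto -s$, for each sign $\pm$, each $n_1\mid qm$, and each $n_2\ge 1$, the integrand is precisely $\tilde g(-s)\,(\pi^3 y)^{-s}$ times the gamma ratios with $y=n_1^2 n_2/(q^3 m)$; by the definitions \eqref{G} and \eqref{Gpm} this integral equals $G_{\pm}\!\left(n_1^2 n_2/(q^3 m)\right)$. Collecting the prefactor $q$, the weights $\Lambda(n_1,n_2)/(n_1 n_2)$, and the Kloosterman sums yields the stated right-hand side.

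The main obstacle is the derivation of the functional equation for the \emph{ramified} additive twist with a general first index $m$: the local computation at the primes dividing $q$, the correct assembling of Gauss sums into the Kloosterman sums $S(m\overline a,\pm n_2;mq/n_1)$, and the appearance of the divisor condition $n_1\mid qm$ all demand care, and the cleanest rigorous route is the automorphic-distribution framework of Miller and Schmid \cite{MS}, which handles precisely these ramified twists and legitimizes the interchange of summation with the contour shift. The remaining gamma-factor bookkeeping needed to reconcile the reflected integrand with \eqref{G}--\eqref{Gpm} is routine, but it must be checked against the normalization of the Whittaker expansion \eqref{whittaker} so that the weights $\Lambda(n_1,n_2)/(n_1 n_2)$ come out correctly.
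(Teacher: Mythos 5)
The paper does not actually prove this lemma: it is quoted as a known result of Miller and Schmid \cite{MS} (with the $d_3$ analogue in Lemma \ref{voronoi d3} quoted from \cite{XLI}), so there is no internal proof to compare yours against. Your sketch follows the classical route --- Mellin inversion, functional equation of the additively twisted Dirichlet series, contour shift and reflection $s\mapsto -s$ --- and your bookkeeping identifying the reflected integrand with $G_{\pm}$ via \eqref{G} and \eqref{Gpm} is the right shape for how such formulas are assembled.

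Read as a standalone proof, however, there is a genuine gap, and it sits exactly where you flag it. The step ``open $e(an/q)$ into Dirichlet characters mod $q$ and invoke the functional equation of $L(s,\phi\otimes\chi)$'' works cleanly only when every non-principal character mod $q$ is primitive, i.e.\ essentially when $q$ is prime; for composite $q$ the imprimitive characters have no usable functional equation, and the ``iteration on the modulus'' you describe for the terms with $(n,q)>1$ is precisely the point at which classical attempts at a general-modulus $GL(3)$ Voronoi formula broke down --- repairing it requires combining the functional equations for primitive twists with the Hecke relations in a nontrivial way (or bypassing characters entirely, as \cite{MS} does with automorphic distributions). Likewise, the emergence of the Kloosterman sums $S(m\overline{a},\pm n_2;mq/n_1)$ together with the divisor condition $n_1\mid qm$ for a general first index $m$ is not routine conductor-tracking; it is part of the substantive arithmetic content of the theorem. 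Since your proposal ultimately defers exactly these steps to \cite{MS}, it does not constitute an independent proof but rather reduces the lemma to the very result being cited --- which is, in effect, what the paper itself does by quotation, except that then the Mellin-inversion scaffolding you construct around the citation is not carrying any logical weight.
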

\subsection{ \bf Voronoi summation for the ternary divisor function} The Voronoi formula for $d_3$ was proved by Ivi\'{c}, and later Li \cite{XLI} came up with a more explicit formula for it. To achieve his result, set 
\begin{equation} \label{sigma 00}
    \sigma_{0,0}(m,n) = \sum_{\substack{d\mid n \\ d > 0}} \sum_{\substack{d^\prime \mid \frac{n}{d}\\ d^\prime>0 \\ (d^\prime,m) = 1}} 1 = \sum_{e \mid(m,n)} \mu(e)d_3(n/e).
\end{equation}
For $h \in C_c^{\infty}(0,\infty),$ for $\ell=0,1$ and $\sigma > -1-2\ell$ , set
\begin{equation*}
    H_\ell(y) = \frac{1}{2 \pi i} \int_{(\sigma)} (\pi^3 y)^{-s} \frac{\Gamma(\frac{1 + s +2\ell}{2})^3}{\Gamma(\frac{-s}{2})^3} \tilde{h}(-s-\ell) ds,
\end{equation*}
and 
\begin{equation} \label{Hpm}
    H_\pm (y) = \frac{1}{2 \pi^{3/2}} \left(H_0(y) \mp \frac{i}{\pi^3 y} H_1(y)\right). 
\end{equation}
Observe that the behavior of $G_{\pm}$ is the same as $H_{\pm}$. Now, with the aid of the above terminology, we state the Voronoi summation formula for $d_3$ in the following lemma.

\medskip

\begin{lemma} \label{voronoi d3}
    Let $h \in C_c^{\infty}(0,\infty)$, $a, \overline{a}, q \in \mathbb{Z}^+$ with $a\overline{a} \equiv 1 \; \textrm{mod}\; q $ we have
\begin{dmath*}
\begin{aligned}
    \sum_{n \geqslant1}& d_3(n) e\left( \frac{a n}{q}\right) h(n) \\
   &= q \sum_{\pm}\sum_{n_1 \mid q} \sum_{n_2 =1}^\infty \frac{1}{n_1 n_2} \sum_{m_1 \mid n_1} \sum_{m_2 \mid \frac{n_1}{m_1}} \sigma_{0,0} \left( \frac{n_1}{m_1 m_2}, n_2 \right) S\left( \overline{a}, \pm n_2 ; \frac{q}{n_1}\right) H_\pm\left(\frac{n_1^2 n_2}{q^3}\right) \\
    & \hspace{1cm}+ \frac{1}{2q^2} \Tilde{h}(1) \sum_{n_1 \mid q} n_1 d(n_1)\; P_2(n_1,q)\; S\left( \overline{a}, 0 ; \frac{q}{n_1}\right) \\
    &\hspace{1cm}+ \frac{1}{2q^2} \Tilde{h}^\prime(1) \sum_{n_1 \mid q} n_1 d(n_1) \; P_1(n_1,q) \; S\left( \overline{a}, 0 ; \frac{q}{n_1}\right) \\
    &\hspace{1cm}+ \frac{1}{2q^2} \Tilde{h}\dblprime(1) \sum_{n_1 \mid q} n_1 d(n_1) \; P_0(n,q) \ S\left( \overline{a}, 0 ; \frac{q}{n_1}\right),
\end{aligned}
\end{dmath*}
where
\begin{equation} \label{P1}
   P_0(n_1,q)=1/2, \ \ \  P_1(n_1,q) = \frac{5}{3} \log{n_1} - 3 \log{q} + 3\gamma - \frac{1}{3 d(n_1)} \sum_{l \mid n_1} \log{l},
\end{equation}
and
\begin{multline} \label{P2}
    P_2(n_1,q) = (\log{n_1})^2 - 5\log{q} \log{n_1} + \frac{9}{2}(\log{q})^2 + 3\gamma^2 - 3\gamma_1 + 7\gamma \log{n_1} -9\gamma \log{q} \\
    + \frac{1}{d(n_1)} \left( (\log{n_1} + \log{q} - 5\gamma) \sum_{l \mid n_1} \log{l} -\frac{3}{2} \sum_{l \mid n_1}(\log{l})^2 \right),
\end{multline}
where $\gamma:= \lim_{s \rightarrow 1} \left(\zeta(s) - \frac{1}{s-1}\right)$ is the Euler constant and $\gamma_1:= -\frac{d}{ds} \left(\zeta(s) - \frac{1}{s-1}\right) \bigg|_{s=1}$ is the Stieltjes constant.
\end{lemma}

\medskip

\noindent The ternary divisor function satisfies the bound $d_3(n) \ll n^\varepsilon$ and bound analogous to \autoref{additive twist} (see \cite{Miller}). By \autoref{sigma 00}, trivially, we have
\begin{equation} \label{RB d3}
    \sum_{m \ll X} \Bigg| \sum_{m_1 \mid n} \sum_{m_2 \mid \frac{n}{m_1}} \sigma_{0,0}\left(\frac{n}{m_1 m_2}, m\right) \Bigg|^2 \ll \sum_{m \ll X} \left(d_3(n) d_3(m)\right)^2 \ll X^{1 + \varepsilon}.
\end{equation}
In the practical application of the Voronoi summation formula, it is essential to know the asymptotic behavior of $H_i$, for $i=0,1$. This requirement is fulfilled by the following lemma by X. Li \cite{XLI2}. 

\medskip

\begin{lemma} \label{GO}
 Suppose $h$ is a smooth function compactly supported on $[Y, 2Y]$, $H_\pm$ be as in \autoref{Hpm}. Then for any fixed integer $K \geq 1$ and $yY \gg 1$, we have
 \begin{equation*} 
     H_\pm(y) =  \pi^4 y \int_{0}^\infty h(z) \sum_{j=1}^K \frac{c_j e{(3 (yz)^{1/3})} + d_j e{(3 (yz)^{1/3})} }{(\pi^3 yz)^{j/3}} dz + \mathcal{O}\left((yY)^{\frac{-K + 2}{3}}\right),
 \end{equation*}
where $c_j$ and $d_j$ are absolute constants depending on $\alpha_1, \alpha_2$ and $\alpha_3$. In particular $c_1 = -d_1 =-\frac{2}{\sqrt{3\pi}}$.
\end{lemma}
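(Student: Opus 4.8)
The plan is to reduce $G_0(y)$ to a single oscillatory integral in the transform variable and then to evaluate it by the saddle-point method. First I would insert the Mellin transform $\tilde g(-s)=\int_0^\infty g(z)z^{-s-1}\,dz$ into \eqref{G} with $l=0$ and interchange the two integrations (justified by absolute convergence once the contour is placed appropriately and $g$ is smooth of compact support). This gives $G_0(y)=\int_0^\infty g(z)\,z^{-1}K(\pi^3yz)\,dz$, where $K(X)=\frac{1}{2\pi i}\int_{(\sigma)}X^{-s}\prod_{j=1}^3\frac{\Gamma((1+s+\alpha_j)/2)}{\Gamma((-s-\alpha_j)/2)}\,ds$ is a one-variable kernel. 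Since $z\asymp M$ on the support, the hypothesis $yM\gg1$ forces the argument $X=\pi^3yz\gg1$, placing us in the large-argument regime. Because the statement retains the outer $z$-integral, it suffices to produce an asymptotic expansion of $K(X)$ and to integrate it back against $g$ term by term.

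Next I would clean up the quotient of Gamma factors. Writing $u_j=(1+s+\alpha_j)/2$, so that $(-s-\alpha_j)/2=\tfrac12-u_j$, the reflection and duplication formulas give $\frac{\Gamma(u_j)}{\Gamma(1/2-u_j)}=\pi^{-1/2}\,2^{-s-\alpha_j}\,\Gamma(1+s+\alpha_j)\cos\!\big(\tfrac{\pi(1+s+\alpha_j)}{2}\big)$. Since $\alpha_1+\alpha_2+\alpha_3=0$ (immediate from \eqref{langland}), the product over $j$ collapses the powers of $2$ and yields $K(X)=\pi^{-3/2}\,\frac{1}{2\pi i}\int_{(\sigma)}(8X)^{-s}\prod_{j=1}^3\Gamma(1+s+\alpha_j)\cos\!\big(\tfrac{\pi(1+s+\alpha_j)}{2}\big)\,ds$. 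Writing each cosine as a sum of two exponentials and applying Stirling's formula to the three Gamma factors turns the integrand, on $s=\sigma+i\tau$, into a polynomial-type amplitude in $\tau$ times $e^{i\Phi(\tau)}$; crucially, the exponential decay $e^{-3\pi|\tau|/2}$ of $|\Gamma|^3$ is cancelled by exactly one of the two cosine exponentials (the other being exponentially small), leaving a genuinely oscillatory integral with leading phase $\Phi(\tau)\approx 3\tau\log\tau-3\tau-\tau\log(8X)$ for $\tau>0$.

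I would then apply the method of stationary phase. The equation $\Phi'(\tau)=3\log\tau-\log(8X)=0$ has solution $\tau_0=(8X)^{1/3}=2X^{1/3}$, with $\Phi(\tau_0)=-6X^{1/3}$ and $\Phi''(\tau_0)=3/\tau_0\asymp X^{-1/3}$, while the symmetric saddle at $\tau=-\tau_0$ carries phase $+6X^{1/3}$. Since $6X^{1/3}=6\pi(yz)^{1/3}$, the two conjugate saddle contributions combine into the real oscillations $\cos(6\pi(yz)^{1/3})$ and $\sin(6\pi(yz)^{1/3})$. The leading saddle term, of size $\asymp X^{2/3}$ relative to the outer factor, produces the $j=1$ term; pushing the Stirling expansion and the Taylor expansion of $\Phi$ about each saddle to higher order generates the full series $\sum_{j\le k}$, each successive term smaller by a factor $X^{-1/3}\asymp(yM)^{-1/3}$. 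Tracking the Stirling constants $\sqrt{2\pi}$, the Gaussian factor $\sqrt{2\pi/|\Phi''(\tau_0)|}$, the stationary-phase rotations $e^{\mp i\pi/4}$, and the constant phases from the cosine exponentials pins down the leading coefficients: the cosine contribution cancels between the two saddles at top order, giving $c_1=0$, while the surviving sine contribution gives $d_1=-2/\sqrt{3\pi}$.

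The main obstacle is the rigorous and uniform control of the error. One must truncate the $\tau$-integral at a suitable height, bound the range away from the two saddles by repeated integration by parts (using that $\Phi'$ is bounded away from $0$ there), and control the remainder left after $k$ terms of the Stirling and stationary-phase expansions; the combined tail must be shown to be $O((yM)^{(-k+2)/3})$, which matches the size $\asymp(yM)^{(2-k)/3}$ of the first omitted term. All of this has to hold uniformly in the Langlands parameters $\alpha_j$, which by Jacquet--Shalika satisfy $|\mathrm{Re}\,\alpha_j|<\tfrac12$. This uniformity, together with the careful bookkeeping of constants needed to isolate $d_1$, is the delicate part; reinstating the outer integral $\int_0^\infty g(z)(\cdots)\,dz$ term by term then gives the stated expansion for $G_0(y)$.
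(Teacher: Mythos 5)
Your proposal is correct and follows essentially the same route as the paper's source: the paper gives no proof of its own, deferring to Lemma 2.1 of \cite{XLI2}, whose derivation is exactly your argument --- Mellin inversion to a one-variable kernel $K(\pi^3 yz)$, reflection/duplication together with $\alpha_1+\alpha_2+\alpha_3=0$ to reduce the gamma quotient to $(8X)^{-s}\prod_{j=1}^3\Gamma(1+s+\alpha_j)\cos\bigl(\tfrac{\pi(1+s+\alpha_j)}{2}\bigr)$, then Stirling's formula and stationary phase at $\tau_0=2X^{1/3}$ with phase $\Phi(\tau_0)=-6X^{1/3}$. Your bookkeeping is consistent (the two conjugate saddles combine so that the cosine terms cancel, giving $c_1=0$ and $d_1=-\tfrac{2}{\sqrt{3\pi}}$, and the error $O\bigl((yM)^{(2-k)/3}\bigr)$ matches the first omitted term), so apart from the deferred uniform error estimates this is the cited proof.
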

\begin{proof}
    See \cite[Lemma 2.1]{XLI2} for the proof.
\end{proof}

\medskip

\noindent Having outlined the Voronoi summation formula for $\GL(3)$ coefficients, we now turn to the Poisson summation formula, another crucial tool in our analysis.
\subsection{\bf Poisson summation formula} Let $\phi : \mathbb{R}^n \rightarrow \mathbb{C}$ be any Schwartz class function. The Fourier transform of $\phi$ is defined as 
\begin{equation*}
    \hat{\phi}(\mathbf{y}) = \int_{\mathbb{R}^n} \phi(\mathbf{x}) e(- \mathbf{x.y}) \; d\mathbf{x},
\end{equation*}
where $d\mathbf{x}$ is the usual Lebesgue measure on $\mathbb{R}^n$. In the following lemma, we state the Poisson summation formula.

\medskip

\begin{lemma} \label{poisson formula}
    Let $\phi$ and $\hat{\phi}$ be as defined above. Then we have
    \begin{equation}
        \sum_{n \in \mathbb{Z}^n} \phi(n) = \sum_{m \in \mathbb{Z}^n} \hat{\phi}(m).
    \end{equation}
\end{lemma}
\begin{proof}
    See \cite[Theorem 4.4]{IK}.
\end{proof}
\subsection{Gauss Sum} Let $Q(\bf{x})$ be a positive definite quadratic form in $\bf{x} = (x_1,x_2,...,x_r)$. We define the Gauss sum associated with a quadratic form $Q(\mathbf{x})$ as
\begin{equation} \label{Gauss sum}
    G_{\mathbf{m}}\left(\frac{a}{q}\right) := \sum_{\mathbf{x} \; \textrm{mod } q} e\left(\frac{a}{q} \left(Q(\mathbf{x}) + \mathbf{m}^t \mathbf{x}\right) \right).
\end{equation}
We have the following precise expression for the Gauss sum defined above.

\medskip

\begin{lemma} \label{gauss sum}
    Let $(q,2|\mathbf{A}| a) = 1$, where $\mathbf{A}$ is matrix associated with quadratic form $Q$ and $|\mathbf{A}|$ denotes the determinant of $\mathbf{A}$ and $\mathbf{m} \in \mathbb{Z}^r$. We have
    \begin{equation*} 
         G_{\mathbf{m}}\left(\frac{a}{q}\right) = \left(\frac{|\mathbf{A}|}{q}\right) \left( \varepsilon_q \left( \frac{2 a}{q}\right) \sqrt{q} \right)^r e\left( -\frac{a}{q} Q^*(\mathbf{m}) \right),
    \end{equation*}
where $Q^*(\mathbf{x})$ is adjoint quadratic form, $N$ is an integer such that $N Q^*(\mathbf{x})$ has integral coefficients, $\left(\tfrac{\cdot}{q}\right)$ denotes the Kronecker symbol and 
\begin{equation} \label{varepsilon}
   \varepsilon_q = \begin{cases}
 1 & \text{ if }  q \equiv 1 \; \textrm{mod} \; 4 \\
i & \text{ if } q \equiv -1 \; \textrm{mod} \; 4
\end{cases}.
\end{equation}    
\end{lemma}
\begin{proof}
    See (\cite{IK}, page $475$, Lemma $20.13$).
\end{proof}

\noindent For a particular case when $x \in \mathbb{Z}$ and $ m=0$ in \autoref{Gauss sum}, we have the following result.

\medskip

\begin{lemma} \label{quadratic gauss sum}
If $(a,q) = 1$, we have
\begin{equation*}
    \sum_{x \; \textrm{mod } q} e\left( \frac{ax^2}{q}\right) = \begin{cases}
        0 & \text{if} \; q \equiv 2 \; \textrm{mod} \; 4 \\ \vspace{0.1cm}
        \varepsilon_q \sqrt{q} \left(\frac{a}{q} \right) & \text{if} \; q \equiv \pm 1 \; \textrm{mod} \; 4 \\ \vspace{0.1cm}
         \varepsilon_{q_0} \sqrt{q} \left( \frac{a}{q_0}\right) \lr{1 + e\lr{\frac{aq_0}{4}}} & \text{if} \; q =2^k q_0 \textrm{ with } k \equiv 0 \textrm{ mod } 2  \\ \vspace{0.1cm} 
        \varepsilon_{q_0} \sqrt{2q} \lr{\frac{a}{q_0}} e\lr{\frac{aq_0}{8}}  & \text{if} \; q =2^k q_0 \textrm{ with } k \equiv 1 \textrm{ mod } 2 
    \end{cases}
\end{equation*}
where $\varepsilon_q$ is defined in  \autoref{varepsilon}, $q_0$ is odd and $k\geq 2$.
\end{lemma}
\begin{proof}
    See (\cite{Khan+Young}, Lemma $5.1$).
\end{proof}
\subsection{Oscillatory integrals} We also recall the following estimates of exponential integrals. Let  $ g$ be a continuous function defined on an interval $[a, b] $.  Variation of the function $ g$ is defined by 
\begin{align} \label{variation of g}
\textrm{Var}_{[a,b]} g := \int_a^b |g^\prime(x)| dx .
\end{align}
Let
\[
I= \int_a^b g(x) e(F(x))dx. 
\]
\begin{lemma} \label{exponential sum}
{\bf (Exponential sum lemma)} Let $F$ be real and twice differentiable function on $[a,b]$, $g$ and $I$ are as above. Then if $F^\prime$ is monotone and $|F^\prime(x)|  \geqslant \mu_1 >0 $ for $a\leqslant x \leqslant b$, we have $I \ll \textrm{Var}_{[a,b]} g/\mu_1$. Further, if $|F\dblprime(x)| \geqslant\mu_2 >0$. Then we have $I \ll \textrm{Var}_{[a,b]} g/\mu_2^{1/2}$, where $\textrm{Var}_{[a,b]} g$ is defined in \autoref{variation of g}.

\end{lemma} 
\begin{proof}
See \cite[Lemma 2.1]{Hux}. 
\end{proof}

\noindent More generally, let $\mathcal{F}$ be an index set and $X=X_J: \mathcal{F} \rightarrow \mathbb{R}_{\geq1}$ be a function of $J \in \mathcal{F}$. A family of $\{u_J\}_{J \in \mathcal{F}}$ of smooth functions supported on a product of dyadic intervals in $\mathbb{R}_{>0}^d$ is called $X$-inert if for each $j=(j_1,...,j_d)\in \mathbb{Z}_{\geq0}^d$ we have
\begin{equation*}
    \sup_{J \in \mathcal{F}} \sup_{(x_1,...,x_d) \in \mathbb{R}_{>0}^d} X^{-j_1-...-j_d}_{J} \mid x_1^{j_1}...x_d^{j_d}u^{(j_1,...,j_d)}_J(x_1,...,x_d) \mid  \ \ll_{j_1,...,j_d} 1.
\end{equation*}
We will use the following stationary phase lemma several times.

\medskip

\begin{lemma}\label{stationary phase}
    Suppose $u_J$ is $X$-inert in $t_1,...,t_d$, supported on $t_1\asymp T$ and $t_i \asymp T_i$ for $i=2,...,d$. Suppose that on the support of $u_J$, $h=h_J$ satisfies that
    \begin{equation*}
        \frac{\partial^{a_1+a_2+...+a_d}}{\partial t_1^{a_1}...\partial t_d^{a_d}}h(t_1,...,t_d) \ll_{a_1,...,a_d} \frac{Y}{T^{a_1}}\frac{1}{T_2^{a_2}...T_d^{a_d}},
    \end{equation*}
    for all $a_1,...,a_d \in \mathbb{N}$. Let
    \begin{equation*}
        \mathcal{I} = \int_{\mathbb{R}} u_J(t_1,...,t_d)e^{i h(t_1,...,t_d)} dt_1.
    \end{equation*}
    \begin{itemize}
        \item[(a)] Suppose $\frac{\partial}{\partial t_1} h(t_1,...,t_d) \gg \frac{Y}{T}$ for all $(t_1,...,t_d) \in supp \ u_J$. Suppose $Y/X\geqslant1$. Then $\mathcal{I} \ll_C T(Y/X)^{-C}$ for $C$ arbitrary large.
        \item[(b)] Suppose $\frac{\partial^2}{\partial t_1^2} h(t_1,...,t_d) \gg \frac{Y}{T^2}$ for all $(t_1,...,t_d) \in supp \ u_J$, and there exists $t_0 \in \mathbb{R}$ such that $\frac{\partial}{\partial t_1}h(t_0,t_2,...,t_d)=0$. Suppose that $Y/X^2 \geqslant R \geqslant 1$. Then
        \begin{equation*}
            \mathcal{I} = \frac{T}{\sqrt{Y}} e^{ih(t_0,t_2,...,t_d)}U_J(t_2,...,t_d) + \mathcal{O}_C(TR^{-C}),
        \end{equation*}
        for some $X$-inert family of functions $U_J$ and $C>0$ may be taken to be arbitrarily large.
    \end{itemize}
\end{lemma}
\begin{proof}
    See Blomer-Khan-Young (\cite{BYK}, Section 8) and Kiral-Petrow-Young (\cite{KPY}, Section 3).
\end{proof}
\section{The Circle Method} \label{circle method}
We will begin this part with a well-known Fourier expansion of the $\delta$-symbol, which was developed by Duke, Friedlander, and Iwaniec and is presented in Chapter $20$ of \cite{IK}. More specifically, we will use the expansion $(20.157)$. 

\medskip

\begin{lemma} \label{dfi}
Let $\delta : \mathbb{Z} \rightarrow \{0,1\}$ be defined by
 \begin{equation} 
  \delta(x)= 
  \begin{cases}
   1 \ \ \ \ \ \textrm{if} \ \ \  x = 0, \\
   0 \ \ \ \ \ \textrm{if} \ \ \ x \neq 0.
  \end{cases}
 \end{equation}
Then for $n,m \in \mathbb{Z} \cap [-2X,2X],$ we have
 \begin{equation} \label{delta}
     \delta(n-m) = \frac{1}{\mathcal{Q}} \sum_{q =1}^\infty \frac{1}{q}\;
     \sideset{}{^\star}\sum_{a \;\textrm{mod} \;q} e\left(\frac{(n-m)a}{q}\right) \int_{\mathbb{R}} \psi(q,x)e\left(\frac{(n-m)x}{q\mathcal{Q}}\right)dx,
 \end{equation}
where $\mathcal{Q}=2 X^{1/2}$. The function $\psi$ in \autoref{delta} is not explicitly given. Nevertheless, the following properties of the function $\psi(q,x)$ are of our interest:
 \begin{align}
     &\psi(q,x) = 1 + h(q,x), \;\;\; \text{with} \;\;\; h(q,x) = \mathcal{O}\left(\frac{\mathcal{Q}}{q} \left(\frac{q}{\mathcal{Q}} + |x| \right)^A \right), \label{delta1}\\
     &\psi(q,x) \ll |x|^{-A}, \label{delta2} \\
     &x^j\frac{\partial^j}{\partial x^j} \psi(q,x) \ll \text{min} \left\{ \frac{\mathcal{Q}}{q}, \frac{1}{|x|}\right\} \log \mathcal{Q}, \;\; \text{for any}\;\; A>1, \;j \geqslant1. \label{delta3}
 \end{align}
 In particular, \autoref{delta2} implies that the effective range of integral in \autoref{delta} is $[-X^\epsilon, X^\epsilon].$ It also follows from \autoref{delta1} that if $q \ll \mathcal{Q}^{1-\epsilon}$ and $x \ll \mathcal{Q}^{-\epsilon}$, then $\psi(q,x)$ can be replaced by 1, at the cost of a negligible error term. If $q \gg \mathcal{Q}^{1-\epsilon}$, then we get $x^j\frac{\partial^j}{\partial x^j} \psi(q,x) \ll \mathcal{Q}^{\epsilon},$ for any $j \geqslant1$. If $q \ll \mathcal{Q}^{1- \epsilon}$ and $\mathcal{Q}^{-\epsilon} \ll |x| \ll \mathcal{Q}^{\epsilon}$, then $x^j\frac{\partial^j}{\partial x^j} \psi(q,x) \ll \mathcal{Q}^{\epsilon},$ for any $j \geqslant1$. Finally, by Parseval and Cauchy, we get
  \begin{equation*}
      \int_{\mathbb{R}} |\psi(q,x)| \; + \; |\psi(q,x)|^2 \; dx \ll \mathcal{Q}^\varepsilon,
  \end{equation*}
  i.e., $\psi(q,x)$ has average size one in the $L^1$ and $L^2$ sense.
 \end{lemma}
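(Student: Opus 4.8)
\emph{The plan is} to realise the delta symbol through a smoothed circle-method dissection, following the Duke--Friedlander--Iwaniec construction (equivalently Heath-Brown's). Since we only ever need to detect $n=0$ among integers with $|n|\le 2L$, the first move is to fix the scale $\mathcal{Q}=2L^{1/2}$ so that $\mathcal{Q}^2\asymp L$ exceeds the window $|n|\le 2L$ up to a bounded factor; this is exactly the threshold at which the Farey arcs attached to denominators $q\le\mathcal{Q}$ separate $0$ from every nonzero $n$ in the range. Concretely, I would fix a smooth compactly supported weight $w$ and from it build a two-variable function $h(x,y)$ — a partition-of-unity type object in the variable $q/\mathcal{Q}$, Fourier-dual in $n/\mathcal{Q}^2$ — whose defining sum over the complementary variable telescopes.

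The core step is to prove the exact identity \eqref{delta} by orthogonality of additive characters. The point is that summing $\sideset{}{^\star}\sum_{a\,\mathrm{mod}\,q}e(an/q)$ reassembles Ramanujan sums $c_q(n)$, and the particular choice of $w$ forces the resulting weighted series $\sum_q(\cdots)\,c_q(n)$ to collapse to $\mathbbm{1}[n=0]$ throughout $|n|\le 2L$. The function $\psi(q,x)$ is then, \emph{by definition}, the rescaled Fourier transform in the second argument of $h(q/\mathcal{Q},\cdot)$, which is precisely what produces the oscillatory kernel $e\!\left(nx/(q\mathcal{Q})\right)$ appearing in \eqref{delta}.

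With $\psi$ so defined, properties \eqref{delta1}--\eqref{delta3} are analytic consequences of the smoothness and compact support of $w$. The rapid decay \eqref{delta2}, $\psi(q,x)\ll|x|^{-A}$, follows by integrating by parts $A$ times in the Fourier inversion (each derivative falling on the smooth weight costs nothing and gains a factor $|x|^{-1}$), and this is what legitimises truncating the $x$-integral to $[-L^\varepsilon,L^\varepsilon]$. The derivative bounds \eqref{delta3} come from differentiating under the integral and again integrating by parts, separating the two regimes $q\ll\mathcal{Q}$ and $q\gg\mathcal{Q}$. The approximation $\psi(q,x)=1+h(q,x)$ with the stated error in \eqref{delta1} I would extract from a Taylor expansion of the weight about the diagonal, where for $q\ll\mathcal{Q}^{1-\varepsilon}$ and $|x|\ll\mathcal{Q}^{-\varepsilon}$ the leading term is exactly $1$. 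Finally, the average-size-one statement in $L^1$ and $L^2$ is immediate from Parseval's identity and Cauchy--Schwarz applied to $\psi(q,\cdot)$ together with its Fourier transform.

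\textbf{Main obstacle.} The delicate point is constructing a \emph{single} weight that makes the collapse in the core step genuinely exact — so that no spurious main-term correction survives for $0<|n|\le 2L$ — while simultaneously endowing $\psi$ with all three of \eqref{delta1}--\eqref{delta3}. Balancing exactness of the identity against the required decay and derivative control is exactly why one uses the specific DFI/Heath-Brown weight rather than a crude sharp cutoff. As this is by now a standard and fully documented construction, in the write-up I would simply invoke \cite{IK}, Chapter~20.
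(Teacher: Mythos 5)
Your proposal is correct and ends up in the same place as the paper: the paper's entire proof of this lemma is a citation to Chapter~20 of Iwaniec--Kowalski, and your sketch is an accurate outline of precisely that DFI construction (telescoping divisor identity, detection of divisibility by additive characters, $\psi$ defined as the rescaled Fourier transform of the smooth weight, with \eqref{delta1}--\eqref{delta3} following from smoothness and integration by parts) before you likewise defer to the same reference. No gap to report.
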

\section{Proof of \autoref{sharp d3} and \autoref{sharp FC}}
The sum defined in \autoref{S(X,Y)} can be expressed as follows:
\begin{equation} \label{S(X,Y) to S^*(X,Y)}
	\mathcal{S}(X,Y) = \mathop{\sum\sum}_{n_1,n_2 \in \mathbb{Z}} \mathcal{A}(Q(n_1,n_2))W_1 \left(\frac{n_1}{X}\right)W_2\left(\frac{n_2}{Y}\right) + \mathcal{O}_\varepsilon\lr{\frac{X^{1+\varepsilon}Y}{R}},
\end{equation}
where $W_i$ for $i=1,2$ are two smooth bump functions defined as in \autoref{d3 thm} with $W_i =1$ on $[1+R^{-1},2-R^{-1}]$. To get \autoref{S(X,Y) to S^*(X,Y)} for the Fourier coefficient $A_\phi(1,n)$, we have used \autoref{Ramanujan bound}. Now, using the results of  \autoref{d3 thm} and \autoref{GL3 FC}, we obtain $R=X^{-1/4}Y^{1/3}$. Thus, we get our desired estimates.

\medskip

\noindent We now proceed to prove \autoref{d3 thm} and \autoref{GL3 FC} by using the delta method.
\section{Proof of \autoref{d3 thm} and \autoref{GL3 FC}}
Let us denote the first term on the right side of \autoref{S(X,Y) to S^*(X,Y)} by $\mathcal{S}^*(X,Y)$. Now, we introduce the $\delta$-symbol and rewrite the expression for $\mathcal{S}^*(X,Y)$ as
\begin{equation*}
	\mathcal{S}^*(X,Y) = \mathop{\sum\sum}_{n_1,n_2 \in \mathbb{Z}} \sum_{r \in \mathbb{Z}} \mathcal{A}(r) \delta(r-Q(n_1, n_2)) V\left( \frac{r}{X^2} \right)W_1 \left(\frac{n_1}{X}\right)W_2\left(\frac{n_2}{Y}\right),
\end{equation*}
where $V$ is another smooth bump function supported on the interval $[1/2,5/2]$ with $V \equiv 1$ on $[1,2]$ and $x^jV^{(j)}(x) \ll_j 1$ for all $j \geqslant 0$. We then apply the expansion for $\delta$-symbol given in \autoref{dfi}, $\autoref{delta}$ to get the following expression. 
\begin{multline}	\label{SXY after CM}
	\mathcal{S}^*(X,Y) = \frac{1}{\mathcal{Q}}  \sum_{1 \leqslant q \leqslant \mathcal{Q}} \frac{1}{q}\ \sideset{}{^\star}\sum_{a \textrm{ mod }q} \int_{\mathbb{R}} \psi(q, x) U(x) \left[ \sum_{r \in \mathbb{Z}} \mathcal{A}(r)e\left(\frac{ar}{q}\right)V\left(\frac{r}{X^2}\right) e\left(\frac{rx}{q\mathcal{Q}}\right) \right] \\ \times \mathop{\sum\sum}_{n_1, n_2 \in \mathbb{Z}}  W_1 \left(\frac{n_1}{X}\right)W_2\left(\frac{n_2}{Y}\right)e\left(\frac{-a Q(n_1, n_2)}{q}\right)  e\left(\frac{-xQ(n_1,n_2)}{q\mathcal{Q}}\right) dx,  
\end{multline}
where $U(x)$ is also a smooth bump function supported on $[-2X^\varepsilon, 2X^\varepsilon]$ with $U \equiv 1$ on $[-X^\varepsilon, X^\varepsilon]$ and $x^j U^{(j)}(x) \ll_j 1$ for all $j \geq 0$.
\subsection{Voronoi summation formula} \label{voronoi}
In this subsection, we apply the Voronoi summation formula on the $r$-sum enclosed within the square bracket. Using \autoref{voronoigl3} if $\mathcal{A}(r)$ is Fourier coefficient of $\SL(3,\mathbb{Z})$ Hecke-Maass cusp form or \autoref{voronoi d3} for $d_3(n)$ with $v_x(y)= V\lr{\frac{y}{X^2}} e\lr{\frac{yx}{q\mathcal{Q}}}$. We have,
\begin{multline} \label{voronoi1}
   \sum_{r \in \mathbb{Z}} \mathcal{A}(r)e\left(\frac{ar}{q}\right)v_x(r) =  \frac{\mathsf{c}}{2q^2} \Tilde{\mathcal{V}}(v_x,a,q) \\ + q \sum_{\pm} \sum_{n\mid q} \sum_{m = 1}^\infty \frac{B(n,m)}{nm} S\left(\overline{a}, \pm m; \frac{q}{n}\right) V_{\pm}\left(\frac{n^2 m}{q^3}\right),
\end{multline}
where 
\begin{equation} \label{B(n,m)}
  B(n,m) = \Biggl\{ \begin{array}{ll}
    A_\phi(n,m) & \quad \text{if } \mathcal{A}(r) = A_\phi(1,r) \\ \vspace{0.2cm}
   \sum_{m_1 \mid n}\sum_{m_2 \mid \frac{n}{m_1}} \sigma_{0,0} \left(\frac{n}{m_1m_2}, m\right) & \quad \text{if } \mathcal{A}(r) = d_3(r) 
      
 \end{array},\Biggr.
\end{equation}
\begin{multline} \label{V tilde}
    \Tilde{\mathcal{V}}(v_x,a,q) = \Tilde{v_x}(1) \sum_{n \mid q} n d(n) P_2(n,q)\; S\left( \overline{a}, 0 ; \frac{q}{n}\right) \\
    + \Tilde{v_x}^\prime(1) \sum_{n \mid q} n d(n) P_1(n,q) S\left( \overline{a}, 0 ; \frac{q}{n}\right) \\
    + \Tilde{v_x}\dblprime(1) \sum_{n \mid q} n d(n) P_0(n,q) S\left( \overline{a}, 0 ; \frac{q}{n}\right),
\end{multline}
and $V_\pm$ is the integral transform defined in \autoref{Gpm}. The constant $\mathsf{c}$ vanishes for the first case and it takes value $1$ for the latter. 
Now we extract the oscillations of $V_\pm$, for $yX^2 \gg X^{\varepsilon}$, we have
\begin{equation*}
    V_\pm(y) = \pi^4 y \int_{0}^\infty v_x(z) \sum_{j=1}^{K_0} \frac{c_j e{(3 (y z)^{1/3})} + d_je(-3(yz)^{1/3})}{(\pi^3 yz)^{j/3}} dz + \mathcal{O}(X^{-2025}),
\end{equation*}
where $K_0 = \floor{\frac{6075}{\varepsilon} +2} +1$ and $\floor{\cdot}$ denotes the greatest integer function. Substituting the expression of $v_x(z)$ and considering the term corresponding to $j=1$, we get
\begin{multline*}
    V_\pm(y) =  \frac{-2 \pi^3 y^{2/3}}{\sqrt{3\pi}} \int_{0}^\infty V\left(\frac{z}{X^2}\right) e\lr{\frac{zx}{q\mathcal{Q}}} z^{-1/3} \left(e(3 (y z)^{1/3}) - e(-3(yz)^{1/3})\right) dz \\+ \mathcal{O}(X^{-2025}),
\end{multline*}
as the other terms can be treated similarly and, in fact, give us better estimates. Using the change of variable $z \rightarrow X^2 z$ and putting $y = n^2 m / q^3$, up to some lower order terms, we arrive at
\begin{eqnarray} \label{osci Gpm}
    V_\pm\lr{\frac{n^2m}{q^3}} &=& \frac{-2\pi^3 }{ \sqrt{3 \pi}} \frac{(X^2 n^2 m)^{2/3}}{q^2} \int_0^\infty V(z) z^{-1/3}e\left( \frac{X^2 xz}{q\mathcal{Q}} + \frac{3(X^2 z n^2m)^{1/3}}{q}\right) dz \notag\\ 
    \hspace{1cm}&+& \frac{2\pi^3 }{ \sqrt{3 \pi}} \frac{(X^2 n^2 m)^{2/3}}{q^2} \int_0^\infty V(z) z^{-1/3}  e\left( \frac{X^2xz}{q\mathcal{Q}} - \frac{3(X^2 z n^2m)^{1/3}}{q}\right) dz \notag\\
   &=& \frac{X^{4/3}(n^2m)^{2/3}}{q^2}   \mathcal{I}_\pm(n^2 m, x,q),
\end{eqnarray}
where 
\begin{equation} \label{I integral}
    \mathcal{I}_\pm(n^2 m, x,q) =  \mathsf{c}_0 \int_0^\infty V_1^\pm(z) z^{-1/3} e\left( \frac{X^2 xz}{q \mathcal{Q}} \pm \frac{3(X^2 z n^2 m)^{1/3}}{q}\right) dz,
\end{equation}
with $V_1^\pm(z) = \pm V(z)$, and $\mathsf{c}_0 =  -\frac{2\pi^3 }{ \sqrt{3 \pi}}$. Using integration by parts repeatedly, the integral $\mathcal{I}_\pm(n^2 m, x,q)$ is negligibly small if 
\begin{equation} \label{M_0}
    n^2 m \gg \frac{q^3}{X^2}=: \mathsf{M}_0.
\end{equation}
For $n^2 m X^2 \ll q^3 X^\varepsilon$, the analysis of $V^\pm$ in \autoref{voronoi1} is straightforward.  
Using the expression in \autoref{Gpm}, it yields a sharper estimate for $\mathcal{S}^*(X,Y)$ (see \cite[Section~$5.9$]{HCIJNT}). Therefore, we conclude this subsection by summarizing the above discussion in the following lemma.

\medskip
\begin{lemma} \label{lemma vornoi}
    Let
    \begin{equation*}
        \mathcal{S}_1 = \sum_{r \in \mathbb{Z}} \mathcal{A}(r) e\lr{\frac{ar}{q}}v_x(r).
    \end{equation*}
    Then, for $n^2mX^2 \gg q^3 X^\varepsilon$, we have
    \begin{multline*}
        \mathcal{S}_1 = \frac{\mathsf{c}}{2q^2}\Tilde{\mathcal{V}}(v_x,a,q) \\ + \frac{X^{4/3}}{q} \sum_{\pm}\sum_{n \mid q} \sum_{n^2m \ll \mathsf{M}_0} \frac{B(n,m)}{n^{-1/3}m^{1/3}} S\lr{\ol{a}, \pm m; \frac{q}{n}} \mathcal{I}_\pm(n^2m,x,q) 
        + \mathcal{O}\lr{X^{-2025}},
    \end{multline*}
 where $v_x(y) = V\lr{\frac{y}{X^2}} e\lr{\frac{xy}{q\mathcal{Q}}}$, $\Tilde{\mathcal{V}}(v_x,a,q)$ is defined in \autoref{V tilde}, the constant $\mathsf{c}$ is $1$ in case of $d_3$ and $0$ otherwise, $\mathcal{I}_\pm$ is an integral transform defined as in \autoref{I integral}, $\mathsf{M}_0$ is defined in \autoref{M_0}, and coefficients $B(n,m)$ are defined in \autoref{B(n,m)}.    
\end{lemma}

\medskip

\noindent After applying the Voronoi summation formula to extract key oscillatory components, we now use the Poisson summation formula to handle the sums over $n_1$ and $n_2$.

\subsection{Poisson summation formula} \label{poisson section} Denote the sums over $n_1$ and $n_2$ in \autoref{SXY after CM} as $\mathcal{P}(X)$ and then
writing $n_i = \alpha_i + \ell_i q$ for $i=1,2$, we have
\begin{multline*}
    \mathcal{P}(X)= \sum_{\alpha_1 \textrm{ mod }q} \sum_{\alpha_2 \textrm{ mod }q} \mathop{\sum\sum}_{\ell_1,\ell_2 \in \mathbb{Z}} W_1\lr{\frac{\alpha_1+ \ell_1 q}{X}} W_2\lr{\frac{\alpha_2+\ell_2q}{Y}} \\ 
    \times e\lr{\frac{-aQ(\alpha_1,\alpha_2)}{q}} e\lr{\frac{-xQ(\alpha_1+ \ell_1q,\alpha_2+\ell_2q)}{q\mathcal{Q}}}.
\end{multline*}
Applying the Poisson summation formula to the sums over $\ell_1$ and $\ell_2$, we get 
\begin{eqnarray*}
 = \sum_{\alpha_1 \textrm{ mod } q} \sum_{\alpha_2 \textrm{ mod } q} e\left(\frac{-aQ(\alpha_1, \alpha_2)}{q}\right) \mathop{\sum\sum}_{m_1, m_2 \in \mathbb{Z}} \mathop{\iint}_{\mathbb{R}^2} W_1 \left(\frac{\alpha_1 + x q}{X}\right)  W_2\left(\frac{\alpha_2 + y q}{Y}\right) \\  \times e\left(\frac{-x Q(\alpha_1 + xq, \alpha_2 + yq)}{q \mathcal{Q}}\right)e(-m_1x - m_2y) \;dx \;dy.
\end{eqnarray*}
Substituting the change of variables $(\alpha_1 + x q)/X=u$ and $(\alpha_2 + y q)/Y=v$, we obtain
\begin{equation} \label{P(X)}
\mathcal{P}(X) = \frac{XY}{q^2} \mathop{\sum\sum}_{m_1, m_2 \in \mathbb{Z}} \mathfrak{C}(m_1,m_2,a,q) \;\;\mathfrak{J}(m_1,m_2,x,q) \
\end{equation}
where the character sum $\mathfrak{C}(m_1,m_2,a,q)$ is given by
\begin{equation*}
    \mathfrak{C}(m_1,m_2,a,q) = \sum_{\alpha_1 \textrm{ mod } q} \sum_{\alpha_2 \textrm{ mod }q} e\left(\frac{-aQ(\alpha_1, \alpha_2) + m_1\alpha_1 + m_2\alpha_2}{q}\right), 
\end{equation*}
and the integral transform $\mathfrak{J}(m_1,m_2,x,q)$ is given by 
\begin{align} \label{integral transform J}
    \mathfrak{J}(m_1,m_2,x,q)= \mathop{\iint}_{\mathbb{R}^2} W_1(u) W_2(v) e\left(\frac{-x Q(uX,vY)}{q\mathcal{Q}} \right) e\left(- \frac{m_1 uX}{q} - \frac{m_2vY}{q}\right)  du\;dv. 
\end{align}
By applying integration by parts $j$ times, and using $$\frac{\partial^j}{\partial u^j} e\left(\frac{-xQ(u X, vY)}{q \mathcal{Q}} \right) \ll \left(\frac{X}{q}\right)^j,$$ we obtain (for $m_1, m_2 \neq 0$) that sum over $m_1$ and $m_2$ is negligibly small unless $m_1 \ll RX^\varepsilon $ and $m_2 \ll R X^{1+\varepsilon}/Y$. Substituting the change of variables $\alpha_1 \rightarrow \overline{a}\alpha_1\;$ and $\alpha_2 \rightarrow \overline{a}\alpha_2$, where $\overline{a}$ is the multiplicative inverse of $a$ modulo $q$. The character sum $\mathfrak{C}(m_1,m_2,a,q)$ gets transformed to 
\begin{align} \label{char poisson}
    \mathfrak{C}(m_1,m_2,a;q) =  \sum_{\alpha_1 \textrm{ mod }q} \sum_{\alpha_2 \textrm{ mod } q} e\left(\frac{-\overline{a}\lr{Q(\alpha_1, \alpha_2) - m_1\alpha_1 - m_2 \alpha_2}}{q} \right).
\end{align}
Combining the expressions for sum $\mathcal{S}_1$ from \autoref{lemma vornoi}, and for $n_1,n_2$ sums from \autoref{P(X)}, we can write $\mathcal{S}^*(X,Y)$ as 
\begin{equation} \label{S split}
    \mathcal{S}^*(X,Y) = \mathcal{S}_M(X,Y) + \mathcal{S}_E(X,Y),
\end{equation}
where
\begin{align} \label{SXY main}
     \mathcal{S}_M(X,Y) &= \mathsf{c}\frac{XY}{2\mathcal{Q}} \sum_{1 \leqslant q \leqslant \mathcal{Q}} \frac{1}{q^5} \sum_{m_1 \ll RX^\varepsilon} \sum_{m_2 \ll RX^{1+\varepsilon}/Y} \ \sideset{}{^\star}\sum_{a \textrm{ mod } q} \mathfrak{C}(m_1,m_2,a;q) \notag\\
     &\hspace{2.5cm}\times \int_{\mathbb{R}} \psi(q,x) U(x) \Tilde{\mathcal{V}}(v_x,a,q) \mathfrak{J}(m_1,m_2,x,q) dx dy,
\end{align}
and
\begin{align} \label{SXY error}
    \mathcal{S}_E(X,Y) &= \frac{X^{7/3}Y}{\mathcal{Q}} \sum_{1 \leqslant q \leqslant \mathcal{Q}} \frac{1}{q^4} \; \sum_{m_1 \ll RX^\varepsilon} \sum_{m_2 \ll RX^{1+\varepsilon}/Y} \sum_{\pm} \sum_{n \mid q} \sum_{n^2 m \ll \mathsf{M}_0} \frac{B(n,m)}{n^{-1/3} m^{1/3}} \notag \\ 
    &\hspace{2.5cm}\times \sideset{}{^\star}\sum_{a \textrm{ mod } q} S\left(\overline{a}, \pm m; \frac{q}{n}\right) \mathfrak{C}(m_1,m_2,a;q) \notag \\ 
    &\hspace{2.5cm}\times \int_{\mathbb{R}} \psi(q,x)U(x) \mathcal{I}_{\pm}(n^2m,x,q) \mathfrak{J}(m_1,m_2,x,q)dx.
\end{align}
\noindent The main term will come from $\mathcal{S}_M$, when $m_1=m_2=0$ which we will analyze in the next subsection. First, we will obtain a bound for $\mathcal{S}_E(X,Y)$. 
\subsection{Bound for \texorpdfstring{$\mathcal{S}_E(X,Y)$}{}}
In this subsection, we will obtain cancellations in $\mathcal{S}_E$, which is defined in \autoref{SXY error}. For that we need to analyze the four-fold integrals present in $\mathcal{S}_E$. 
\subsubsection{ Simplification of integrals} \label{W integral section} 
Set
\begin{equation} \label{Wpm}
    \mathcal{W}^{\pm}(m_1,m_2,n,m,q) := \int_{\mathbb{R}} \psi(q,x) U(x) \mathcal{I}_{\pm}(n^2m,x,q) \mathfrak{J}(m_1,m_2,x,q) dx.
\end{equation}

\noindent We prove the following bound for it.

\medskip

\begin{lemma} \label{Wintegral}
We have
    \begin{equation*}
         \mathcal{W}^{\pm}(...) \ll \begin{cases}
         \vspace{0.2cm}
             \frac{R q^{3/2}X^\varepsilon}{\mathcal{Q}^{4/3} (n^2m)^{1/6}} \textrm{ with } m_1 \asymp \frac{(n^2m)^{1/3}}{X^{1/3}} & \textrm{ if } m_1 \neq 0 \textrm{ and } q \ll \mathcal{Q}^{1-2\varepsilon} \\ \vspace{0.2cm}
             \frac{q X^\varepsilon}{ \mathcal{Q}} & \textrm{ if } m_1 \neq 0 \textrm{ and } q \gg \mathcal{Q}^{1-2\varepsilon} \\
             \vspace{0.2cm}
            \frac{R q^{3/2}X^\varepsilon}{\mathcal{Q}^{4/3} (n^2m)^{1/6}} & \textrm{ if } m_1=0 
         \end{cases}.
    \end{equation*}
\end{lemma}
\begin{proof}
Substitute the expressions for $\mathcal{I}_{\pm}$ and $\mathfrak{J}$ from \autoref{I integral} and \autoref{integral transform J} into \autoref{Wpm}, we get the following four-fold integrals.
\begin{multline*}
     \mathcal{W}^{\pm}(...) = \mathsf{c}_0 \int_{\mathbb{R}} \; \psi(q,x) U(x) \int_{0}^{\infty} V_1^{\pm}(z) z^{-1/3} e\Biggl( \frac{X^2 x z}{q \mathcal{Q}} \pm \frac{3(X^2 z n^2 m)^{1/3}}{q}\Biggr) \\  
     \hfill \times \mathop{\iint}_{\mathbb{R}^2} W_1(u) W_2(v) e\Biggl(\frac{-xQ(uX, vY)}{q \mathcal{Q}}\Biggr) e\Biggl( -\frac{ m_1uX}{q} - \frac{m_2 vY}{q} \Biggr)  du dv dz dx. 
\end{multline*}
First consider the $x$-integral i.e.,
\begin{equation*}
    \int_{\mathbb{R}} \psi(q,x) U(x) e\Biggl( \frac{\lr{X^2 z - Q(uX, vY)} x}{q\mathcal{Q}} \Biggr) dx,
\end{equation*}
for small $q$, i.e., $q \ll \mathcal{Q}^{1-\varepsilon}$, we split the $x-$integral into two parts $| x | \ll \mathcal{Q}^{-\varepsilon}$ and $|x| \gg \mathcal{Q}^{-\varepsilon}$.  In the former case, we use \autoref{dfi} by which we can replace $\psi(q,x)$ by $1$ up to some negligible error term. We obtain
\begin{equation*}
    \int_{|x| \ll \mathcal{Q}^{-\varepsilon}} U(x) e\left( \frac{(X^2 z - Q(uX, vY))x }{q \mathcal{Q}}\right) dx,
\end{equation*}
then applying integration by parts repeatedly, we get that the above integral is negligible unless $q \mathcal{Q} X^\varepsilon \gg | X^2 z - Q(uX, v Y)|$ or $|z - Q(uX, vY)/X^2| \ll \frac{q\mathcal{Q}}{X^{2-\varepsilon}}$. Otherwise, we have 
\begin{equation*}
     \int_{|x|\gg \mathcal{Q}^{-\varepsilon} }\psi(q,x) U(x) e\Biggl( \frac{\lr{X^2 z - Q(uX, vY)} x}{q\mathcal{Q}} \Biggr) dx.
\end{equation*}
Again applying integration by parts repeatedly, and using the properties of $\psi$ and bump function $U$ given in \autoref{dfi} \autoref{delta3}, together with the following properties
\begin{equation*}
   \frac{\partial^j}{\partial x^j}\psi(q,x) \ll \mathcal{Q}^{\varepsilon j}, \quad \quad U^{j}(x) \ll \mathcal{Q}^{\varepsilon j}, \quad \quad \text{ for any } j\geqslant 1,
\end{equation*}
we get that the integral is negligibly small unless $|z - Q(uX, vY)/X^2| \ll \frac{q\mathcal{Q}}{X^{2-\varepsilon}}$. For large $q$, i.e., if $q \gg \mathcal{Q}^{1-\varepsilon}$, this condition holds trivially.\\

\noindent Let $z - Q(uX, vY)/X^2 =: s$ with $|s| \ll \frac{q\mathcal{Q}}{X^{2-\varepsilon}}$, we arrive at the following expression
\begin{multline}
    \mathcal{W}^{\pm}(...) = \mathsf{c}_0 \int_{\mathbb{R}} \; \psi(q,x) U(x) \int_{|s| \ll \frac{q\mathcal{Q}}{X^{2-\varepsilon}}} \mathop{\iint}_{\mathbb{R}^2} V_2^{\pm}\left(\frac{X^2s + Q(uX, vY)}{X^2}\right)W_1(u)  W_2(v)  \\ \times e\Biggl( \frac{sxX^2}{q\mathcal{Q}} \pm \frac{3\lr{ \lr{X^2s + Q(uX, vY)} n^2 m}^{1/3}}{q}\Biggr) e\Biggl( -\frac{ m_1 uX}{q} - \frac{m_2 vY}{q} \Biggr) dudv dsdx \\ \hfill + \text{ negligible error term}, 
\end{multline}
where $V_{2}^{\pm}(z)=V_1^\pm(z) z^{-1/3}$.
Now, we consider the $u$-integral 
\begin{equation*}
   \mathcal{U}:= \int_{\mathbb{R}} W_3(u)  e\Biggl( \frac{\pm3\lr{ \lr{X^2s + Q(uX, vY)} n^2 m}^{1/3}- m_1 uX}{q} \Biggr) du,
\end{equation*}
where $W_3(u) = V_2^{\pm}\left(\frac{X^2s + Q(uX, vY)}{X^2}\right)W_1(u)$. If $q \ll \mathcal{Q}^{1-2\varepsilon}$, then $|s| \ll X^{-\varepsilon}$. We have the following phase function in $u$:
\begin{equation*}
     \phi(u)=  2\pi i\Bigl( \pm \frac{3( Q(uX, vY) n^2 m)^{1/3}}{q} - \frac{m_1 uX}{q} \Bigr).
\end{equation*}
Using the stationary phase analysis given in \autoref{stationary phase} $(b)$ with $\phi\dblprime(u) \asymp \frac{X^{2/3}(n^2m)^{1/3}}{q}$, provided $m_1 \asymp \frac{(n^2m)^{1/3}}{X^{1/3}}$, we get
\begin{equation}
     \mathcal{U} \ll \frac{R\sqrt{q}}{X^{1/3}(n^2 m)^{1/6}},
\end{equation}
with $m_1 \asymp \frac{(n^2m)^{1/3}}{X^{1/3}}$. This restriction over $m_1$ ensures existence of the stationary point. 
Executing the remaining integrals trivially, we get 
\begin{equation*}
    \mathcal{W}^{\pm}(...) \ll \frac{q\mathcal{Q}}{X^{2-\varepsilon}} \cdot \frac{R \sqrt{q}}{X^{1/3} (n^2 m)^{1/6}}  = \frac{R q^{3/2}X^\varepsilon}{\mathcal{Q}^{4/3} (n^2 m)^{1/6}},
\end{equation*}
as $\mathcal{Q}=X$. Otherwise, if $q\gg \mathcal{Q}^{1-2\varepsilon}$, we treat the remaining integrals trivially. Now in last case, the phase function in $u$ integral is $\phi_1(u)= \frac{6\pi i( Q(uX, vY) n^2 m)^{1/3}}{q}$, using the second derivative bound as given in \autoref{exponential sum}, we have
\begin{equation} \label{U W bound}
    \mathcal{U} \ll \frac{R q^{1/2}}{X^{1/3}(n^2m)^{1/6}} \ \textrm{ and } \ \mathcal{W}^\pm(...) \ll \frac{R q^{3/2}X^\varepsilon}{\mathcal{Q}^{4/3} (n^2m)^{1/6}}.
\end{equation}
When both $m_1=m_2=0$, using the second derivative bound on the $u$ integral, treating $v$ integral trivially, we get the bound as in \autoref{U W bound}. This proves the lemma.
\end{proof}

\noindent Next, by opening up the Kloosterman sum, we get
\begin{align} \label{SXY after SF}
  \mathcal{S}_E(X,Y) &= \frac{X^{7/3}Y}{\mathcal{Q}} \sum_{1 \leqslant q \leqslant \mathcal{Q}} \frac{1}{q^4} \; \Biggl[ \sum_{\pm} \sum_{n \mid q} \ \sideset{}{^\star}\sum_{\beta \textrm{ mod } \frac{q}{n}} \sum_{n^2 m \ll \mathsf{M}_0} \frac{B(n,m)}{n^{-1/3} m^{1/3}}  e\left(\frac{\pm m \overline{\beta}}{q/n}\right) \notag\\ 
  &\hspace{1cm}\times \sum_{m_1 \ll RX^\varepsilon} \sum_{m_2 \ll RX^{1+\varepsilon}/Y} \ \sideset{}{^\star}\sum_{a \textrm{ mod } q} e\left( \frac{\overline{a} \beta}{q/n} \right) \mathfrak{C}(m_1,m_2,a;q) \Biggr] \notag \\
  &\hspace{1cm}\times \int_{\mathbb{R}} \psi(q,x) U(x) \mathcal{I}_{\pm}(n^2m,x,q) \; \mathfrak{J}(m_1,m_2,x,q) \;dx. 
\end{align}


     
\subsubsection{ Analyzing the Character sum}
Consider the sum 
\begin{align} \label{char sum quad form}
    \mathfrak{S}_2(m_1,m_2,n ;q) &:= \sideset{}{^\star}\sum_{\beta \textrm{ mod } \frac{q}{n}} \ \sideset{}{^\star}\sum_{a \textrm{ mod } q}   e\left(\frac{\overline{a}\beta }{q/n}\right) \; \mathfrak{C}(m_1,m_2,a;q) \notag\\ 
     &= \sideset{}{^\star}\sum_{\beta \textrm{ mod } \frac{q}{n}} \ \sideset{}{^\star}\sum_{a \textrm{ mod } q}  e\left(\frac{\overline{a} \beta}{q/n} \right) \notag\\ 
     & \times \sum_{\alpha_1 \textrm{ mod } q} \sum_{\alpha_2 \textrm{ mod } q} e\left(\frac{-\overline{a}(Q(\alpha_1,\alpha_2)-m_1\alpha_1-m_2\alpha_2)}{q} \right).
\end{align} 

\noindent We prove the following bound for the above character sum.

\medskip

\begin{lemma} \label{bound S1}
    We have
    \begin{equation}
        \mathfrak{S}_2(m_1,m_2,n;q) \ll \frac{q_1^3 q_2^2 }{n}  d(q_1) d(q_2),
    \end{equation}
where $q=q_1q_2$ with $q_1\mid (2n |\mathbf{A}|)^\infty$ and $(q_2, 2n|\mathbf{A}|q_1) =1$.  
\end{lemma}
\begin{proof}
Write $q=q_1q_2$ with $q_1\mid (2n |\mathbf{A}|)^\infty$ and $(q_2, 2n|\mathbf{A}|q_1) =1$. Substitute $\beta = \beta_1q_2 \overline{q_2} + \beta_2 \frac{q_1}{n}\overline{\frac{q_1}{n}}$, where $q_2 \overline{q_2} \equiv 1 \ \textrm{mod}\; q_1/n$ and $\frac{q_1}{n} \overline{\lr{\frac{q_1}{n}}} \equiv 1 \; \textrm{mod} \; q_2$. Similarly, split the sums over $a, \alpha_1$ and $ \alpha_2$ modulo $q_1$ and $q_2$. We can now write $\mathfrak{S}_2(...)$ as
\begin{align*}
     & \sideset{}{^\star}\sum_{\beta_1 \textrm{ mod } \frac{q_1}{n}} \ \sideset{}{^\star}\sum_{a_1 \textrm{ mod } q_1}  e\left( \frac{\overline{a_1}\overline{q_2}\beta_1}{q_1/n} \right) \sum_{\alpha_1^\prime \textrm{ mod } q_1} \sum_{\alpha_2^\prime \textrm{ mod } q_1} \\
     &\hspace{1cm}\times e\left(\frac{-\overline{a_1}\overline{q_2} \lr{Q(\alpha^\prime_1,\alpha^\prime_2)-m_1\alpha^\prime_1-m_2\alpha^\prime_2}}{q_1} \right)
   \sideset{}{^\star}\sum_{\beta_2 \textrm{ mod } q_2} \ \sideset{}{^\star}\sum_{a_2 \textrm{ mod } q_2}  e\left( \frac{\overline{a_2} (\overline{q_1/n})\beta_2 }{q_2}\right) \\
   &\hspace{1cm}\times \sum_{\alpha_1 \textrm{ mod } q_2} \sum_{\alpha_2 \textrm{ mod } q_2} e\left( \frac{-\overline{a_2} \overline{q_1}\lr{Q(\alpha_1,\alpha_2)- m_1\alpha_1- m_2\alpha_2}}{q_2}\right).
\end{align*}
First, consider the last line of above equation,
\begin{align*}
    &\sideset{}{^\star}\sum_{\beta_2 \textrm{ mod } q_2} \ \sideset{}{^\star}\sum_{a_2 \textrm{ mod } q_2}  e\left( \frac{\overline{a_2}(\overline{q_1/n})\beta_2 }{q_2}\right) \hspace{-0.2cm}\sum_{\alpha_1 \textrm{ mod } q_2} \sum_{\alpha_2 \textrm{ mod } q_2} \\ 
    &\hspace{5.5cm}\times e\left( \frac{-\overline{a_2q_1}\lr{Q(\alpha_1,\alpha_2)- m_1\alpha_1- m_2\alpha_2}}{q_2}\right) \\
    &\hspace{1cm}= \sideset{}{^\star}\sum_{\beta_2 \textrm{ mod } q_2}\; \sideset{}{^\star}\sum_{a_2 \textrm{ mod } q_2} q_2 \; \varepsilon^2_{q_2} \left(\frac{|\mathbf{A}|}{q_2}\right) e\left(\frac{\overline{a_2}\left(\ol{(q_1/n)}\beta_2  + \ol{q_1}NQ^*(m_1,m_2)\right)}{q_2}\right).
\end{align*}
Here we have used \autoref{gauss sum} to the sums over $\alpha_1$ and $\alpha_2$. Now, the sum over $a_2$ is the Ramanujan sum, we can write the above equation as
\begin{align*}
    &= q_2 \; \varepsilon^2_{q_2} \left(\frac{|\mathbf{A}|}{q_2}\right) \sum_{d_2 \mid q_2} d_2 \mu\left(\frac{q_2}{d_2}\right) \sideset{}{^\star}\sum_{\substack{\beta_2 \textrm{ mod } q_2 \\ \beta_2 \equiv -N Q^*(m_1,m_2) \textrm{ mod } d_2}} 1 \\ 
    &\ll q_2 \sum_{d_2 \mid q_2} d_2 \starsum_{\substack{\beta_2 \textrm{ mod } q_2 \\ \beta_2  \equiv -N Q^*(m_1,m_2) \textrm{ mod } d_2}} 1 \\
    &\ll \; q_2  \sum_{d_2 \mid q_2} d_2 \frac{q_2}{d_2} \ll q_2^2 d(q_2).
\end{align*}
Now consider 
\begin{eqnarray*}
    &&\sideset{}{^\star}\sum_{\beta_1\textrm{ mod }\frac{q_1}{n}} \ \sideset{}{^\star}\sum_{a_1 \textrm{ mod } q_1}   e\left( \frac{\overline{a_1}\overline{q_2}\beta_1}{q_1/n} \right) \sum_{\alpha_1^\prime \textrm{ mod } q_1} \sum_{\alpha_2^\prime \textrm{ mod } q_1} \\
    &&\hspace{6.3cm}\times e\left(\frac{-\overline{a_1}\overline{q_2}\lr{Q(\alpha^\prime_1,\alpha^\prime_2)-m_1\alpha^\prime_1-m_2\alpha^\prime_2}}{q_1} \right) \\
   &&\hspace{0.1cm}= \sideset{}{^\star}\sum_{\beta_1 \textrm{ mod } \frac{q_1}{n}}   \sum_{\alpha_1^\prime \textrm{ mod } q_1} \sum_{\alpha_2^\prime \textrm{ mod }q_1} \ \sideset{}{^\star}\sum_{a_1 \textrm{ mod } q_1} \hspace{-0.2cm} e\left(\frac{\overline{a_1}\overline{q_2}(-Q(\alpha^\prime_1,\alpha^\prime_2)+m_1\alpha^\prime_1+m_2\alpha^\prime_2 + \beta_1 n)}{q_1} \right) \\
    &&\hspace{0.2cm}\ll \sum_{\alpha_1^\prime \textrm{ mod } q_1} \sum_{\alpha_2^\prime \textrm{ mod } q_1} \; \sideset{}{^\star}\sum_{\beta_1 \textrm{ mod } \frac{q_1}{n}} \; \sum_{d_1 \mid (q_1,-Q(\alpha^\prime_1,\alpha^\prime_2)+m_1\alpha^\prime_1+m_2\alpha^\prime_2 + \beta_1 n) } d_1 \mu\left(\frac{q_1}{d_1}\right)\\
   &&\hspace{0.2cm}\ll \sum_{\alpha_1^\prime \textrm{ mod } q_1} \sum_{\alpha_2^\prime \textrm{ mod } q_1} \sum_{d_1 \mid q_1} d_1 \sideset{}{^\star}\sum_{\substack{ \beta_1 \textrm{ mod } \frac{q_1}{n} \\ m_1\alpha^\prime_1+m_2\alpha^\prime_2 + \beta_1 n \equiv Q(\alpha^\prime_1,\alpha^\prime_2) \textrm{ mod }d_1}} 1 \\
   &&\hspace{0.2cm}\ll \sum_{\alpha_1^\prime \textrm{ mod } q_1} \sum_{\alpha_2^\prime \textrm{ mod } q_1} \sum_{d_1 \mid q_1} d_1  \frac{q_1}{n d_1} \ll \frac{q^3_1}{n}d(q_1).
\end{eqnarray*}
\end{proof}

\noindent Recall from the \autoref{SXY after SF}, the sum $\mathcal{S}_E(X,Y)$ is given as
\begin{eqnarray*} 
 \mathcal{S}_E(X,Y) &&= \frac{X^{7/3}Y}{\mathcal{Q}} \sum_{m_1 \ll R X^\varepsilon} \sum_{m_2 \ll RX^{1+\varepsilon}/Y } \sum_{1 \leqslant q \leqslant \mathcal{Q}} \frac{1}{q^4}\\
    &&\hspace{1cm}\times \sum_{n \mid q} \ \sideset{}{^\star}\sum_{\beta \textrm{ mod } \frac{q}{n}} \Biggl[ \sum_{\pm} 
     \sum_{n^2 m \ll \mathsf{M}_0} \frac{B(n,m)}{n^{-1/3} m^{1/3}} e\left(\frac{\pm m \overline{\beta}}{q/n}\right)\Biggr] \\
     &&\hspace{1cm}\times \sideset{}{^\star}\sum_{a \textrm{ mod } q} e\left(\frac{\overline{a}\beta }{q/n}\right) \mathfrak{C}(m_1,m_2,a;q) \mathcal{W}^{\pm}(m_1,m_2,n,m,q).
\end{eqnarray*}
Using bounds for $\mathcal{W}^{\pm}$ and $\mathfrak{S}_2$ from \autoref{Wintegral} and \autoref{bound S1}, respectively, we can have the following estimate for $\mathcal{S}_E$.
\begin{eqnarray*}
    &&\ll \frac{X^{7/3+\varepsilon}Y}{\mathcal{Q}} \Biggl( \ \sum_{m_2 \ll       RX^{1+\varepsilon}/Y } \sum_{\substack{1 \leqslant q \ll                    \mathcal{Q}^{1-2\varepsilon} \\ q=q_1q_2}} \frac{1}{q^4}  \sum_{n \mid q_1 \mid (2n|\mathbf{A}|)^\infty} n^{1/3} \frac{q^2 q_1}{n} \sum_{ m \ll            \mathsf{M}_0/n^2} \frac{|B(n,m)|}{ m^{1/3}}\\
        &&\hspace{0.5cm}\times \sum_{m_1 \asymp \frac{(n^2m)^{1/3}}{X^{1/3}}} \frac{R q^{3/2}}{\mathcal{Q}^{4/3} (n^2m)^{1/6}} + \sum_{m_1 \ll R X^\varepsilon} \sum_{m_2 \ll R X^{1+\varepsilon}/Y} \sum_{ \substack{\mathcal{Q}^{1-2\varepsilon} \ll q \leqslant \mathcal{Q} \\ q=q_1q_2}} \frac{1}{q^4} \\ 
        &&\hspace{0.5cm}\times \sum_{n \mid q_1 \mid (2n)^\infty} n^{1/3} \frac{q^2 q_1}{n} \sum_{n^2m \ll \mathsf{M}_0} \frac{|B(n,m)|}{m^{1/3}}
        + \sum_{m_2 \ll RX^{1+\varepsilon}/Y} \sum_{\substack{1 \leqslant q \leqslant \mathcal{Q} \\ q=q_1q_2}} \frac{1}{q^4} \\ 
        &&\hspace{0.5cm}\times \sum_{n \mid q_1 \mid (2n)^\infty} n^{1/3} \frac{q^2q_1}{n} \sum_{m \ll \mathsf{M}_0/n^2} \frac{|B(n,m)|}{m^{1/3}} \frac{Rq^{3/2}}{\mathcal{Q}^{4/3} (n^2m)^{1/6}}\Biggr)\\
    &&\ll \frac{X^{7/3+\varepsilon}Y}{\mathcal{Q}} \Biggl( \frac{R^2 X}{Y} \frac{\mathsf{M}_0^{1/2}}{\mathcal{Q}^{4/3} X^{1/3}} \sum_{q_1 \ll \mathcal{Q}^{1-2\varepsilon}} \sum_{n \mid q_1 \mid (2n)^\infty} \frac{q_1^{1/2}}{n^{4/3}} \sum_{q_2 \ll \frac{\mathcal{Q}^{1-2\varepsilon}}{q_1}} \frac{1}{q_2^{1/2}} \\
        &&\hspace{0.5cm}\times \lr{\sum_{m \ll \mathsf{M}_0/n^2} \frac{|B(n,m)|^2}{m^{1/3}}}^{1/2} + \frac{R^2X \mathsf{M}_0^{1/2}}{Y}  \sum_{q_1 \gg \mathcal{Q}^{1-2\varepsilon}} \sum_{n \mid q_1 \mid (2n)^\infty} \frac{1}{q_1 n^{5/3}} \\
        &&\hspace{0.5cm}\times \sum_{q_2 \gg \mathcal{Q}^{1-2\varepsilon}/q_1} \frac{1}{q_2^2}  \lr{\sum_{m \ll \mathsf{M}_0/n^2} \frac{|B(n,m)|^2}{m^{2/3}}}^{1/2} + \frac{R^2 X \mathsf{M}_0^{1/2}}{Y\mathcal{Q}^{4/3}} \sum_{q_1 \leqslant \mathcal{Q}} \sum_{n \mid q_1 \mid (2n)^\infty} \frac{q_1^{1/2}}{n^2} \\
        &&\hspace{0.5cm}\times\sum_{q_2 \leqslant \mathcal{Q}/q_1} \frac{1}{q_2^{1/2}}  \lr{\sum_{m \ll \mathsf{M}_0/n^2} \frac{|B(n,m)|^2}{m}}^{1/2} \Biggr) \\
    &&\ll \frac{X^{7/3+\varepsilon} Y}{\mathcal{Q}} \Biggl( \frac{R^2 X^{2/3} \mathsf{M}_0^{1/2+ 1/3}}{Y \mathcal{Q}^{4/3}} \sum_{q_1 \ll \mathcal{Q}^{1-2\varepsilon}} \sum_{n \mid q_1 \mid (2n)^\infty} \frac{q_1^{1/2}}{n^{2-\theta}} \frac{\mathcal{Q}^{1/2-\varepsilon}}{q_1^{1/2}} + \frac{R^2 X \mathsf{M}_0^{1/2+1/6}}{Y\mathcal{Q}^{1-2\varepsilon}} \\ 
    &&\hspace{0.5cm}\times \sum_{q_1 \gg \mathcal{Q}^{1-2\varepsilon}} \sum_{n \mid q_1 \mid (2n)^\infty} \frac{1}{n^{2-\theta}} + \frac{R^2 X \mathsf{M}_0^{1/2}}{Y \mathcal{Q}^{4/3}} \sum_{q_1 \leqslant \mathcal{Q}} \sum_{n \mid q_1 \mid (2n)^\infty} \frac{1}{n^{2-\theta}} \mathcal{Q}^{1/2}\Biggr) \ll X^{2+\varepsilon} R^2.
\end{eqnarray*}
Here, we have used \autoref{RB GL3}, \autoref{RB d3} and the fact that the sum over $q_1$ runs over divisors of $(2n|\mathbf{A}|)^\infty$. Overall, we have saved the size $XYR^{-2}$. A little extra saving will give us non-trivial cancellations in the sum $\mathcal{S}_E$.

\subsection{Final estimates}
From subsection \autoref{W integral section}, we have
\begin{equation}
\frac{\partial}{\partial s} \mathcal{W}^\pm(m_1,m_2,n,s,q) \ll 
\begin{cases}
\vspace{0.2cm}
    \frac{R q^{1/2} n^{1/3}}{\mathcal{Q}^{2/3}} \frac{X^\varepsilon}{s^{5/6}} & \ \textrm{ if } m_1 \neq 0 \textrm{ and } q \ll \mathcal{Q}^{1-2\varepsilon} \textrm{ or if } m_1=0 \\
     \vspace{0.2cm}
     \frac{n^{2/3}X^\varepsilon}{X^{1/3} s^{2/3}} & \ \textrm{ if } m_1 \neq 0 \textrm{ and } q \gg \mathcal{Q}^{1-2\varepsilon} \\
    \end{cases}.
\end{equation}
Using the above equation, we can bound the following. In the first case we have 
\begin{align} 
    &\sum_{n \mid q} \sum_{n^2 m \ll \mathsf{M}_0} \frac{B(n,m)}{n^{-1/3} m^{1/3}} e\left(\frac{ m \overline{\beta}}{q/n}\right)\mathcal{W}^{\pm}(...)  \notag \\ 
    &\hspace{1cm}=\sum_{n \mid q} n^{1/3} \biggl[ - \int_1^{\mathsf{M}_0/n^2} \Biggl\{ \sum_{m\leq t} B(n,m) e\left(\frac{ m\overline{\beta}}{q/n}\right) \Biggr\} \frac{\partial}{\partial t}\left( \frac{1}{t^{1/3}} \mathcal{W}^\pm(.,n,t,.)\right) dt \notag\\  
   &\hspace{2cm}+ \sum_{m \ll \mathsf{M}_0/n^2} B(n,m) e\left(\frac{m\overline{\beta}}{q/n} \right) \left(\frac{n^2}{\mathsf{M}_0}\right)^{1/3} \mathcal{W}^\pm(.,n,\mathsf{M}_0/n^2,.) \biggr] \label{B(n,m) partial sum}\\
    &\ll \sum_{n \mid q} n^{1/3} \biggl[ \int_1^{\mathsf{M}_0/n^2} \frac{t^{3/4}}{t^{4/3}} \frac{R X^\varepsilon q^{3/2}}{\mathcal{Q}^{4/3} (n^2 t)^{1/6}} dt \notag\\ 
    &\hspace{1cm}+ \int_1^{\mathsf{M}_0/n^2} \frac{t^{3/4}}{t^{1/3}} \frac{R X^\varepsilon q^{1/2} n^{1/3}}{\mathcal{Q}^{2/3}} \frac{1}{t^{5/6}} \; dt + \left(\frac{\mathsf{M}_0}{n^2}\right)^{3/4} \frac{n^{2/3}}{\mathsf{M}_0^{ 1/3}} \frac{R X^\varepsilon q^{3/2}}{\mathcal{Q}^{4/3} \mathsf{M}_0^{1/6}}\biggr] \notag\\
    &\ll R X^\varepsilon \sum_{n \mid q} n^{1/3} \biggl[ \frac{q^{3/2}}{\mathcal{Q}^{4/3} n^{1/3} } \left(\frac{\mathsf{M}_0 }{n^2}\right)^{1/4} + \frac{q^{1/2} n^{1/3} }{\mathcal{Q}^{2/3}} \left(\frac{\mathsf{M}_0}{n^2}\right)^{7/12} + \frac{q^{3/2} \mathsf{M}_0^{1/4}}{n^{5/6} \mathcal{Q}^{4/3}}\biggr] \notag\\
    &\ll RX^\varepsilon \sum_{n \mid q} n^{1/3} \biggl[ \frac{q^{3/2} }{X^{13/12} n^{5/6}} + \frac{q^{1/2}}{X^{1/12}n^{5/6}} + \frac{q^{3/2}}{X^{13/12} n^{5/6}}\biggr] \notag\\
    &\ll R X^\varepsilon \sum_{n \mid q} \frac{1}{n^{1/2}} \frac{q^{3/2}}{X^{13/12}}. \label{bound after partial sum case1 }
\end{align}
If $m_1\neq 0$ and $q \gg \mathcal{Q}^{1-2\varepsilon}$, we can bound \autoref{B(n,m) partial sum} as
\begin{align} \label{bound after partial sum case2 }
    &\ll \sum_{n \mid q} n^{1/3} \biggl[ \int_1^{\mathsf{M}_0/n^2} \frac{t^{3/4}}{t^{4/3}} \frac{ q X^\varepsilon }{\mathcal{Q} } \;dt + \int_1^{\mathsf{M}_0/n^2} \frac{t^{3/4}}{t^{1/3}} \frac{n^{2/3}X^\varepsilon}{X^{1/3} t^{2/3}} \; dt \\
    &\hspace{5.8cm}+ \left(\frac{\mathsf{M}_0}{n^2}\right)^{3/4} \frac{n^{2/3}}{\mathsf{M}_0^{ 1/3}} \frac{q X^\varepsilon}{\mathcal{Q}} \biggr]\notag \\
    &\ll X^\varepsilon \sum_{n \mid q} n^{1/3} \Biggl[ \frac{q}{\mathcal{Q}} \lr{\frac{\mathsf{M}_0}{n^2}}^{5/12} + \frac{n^{2/3}}{X^{1/3}} \lr{\frac{\mathsf{M}_0}{n^2}}^{3/4} + \frac{q}{\mathcal{Q}} \frac{\mathsf{M}_0^{5/12}}{n^{5/6}}\Biggr] \notag\\
    &\ll \frac{\mathsf{M}_0^{5/12}X^\varepsilon}{\mathcal{Q}} \sum_{n \mid q} \frac{q}{n^{1/2}} + \frac{\mathsf{M}_0^{3/4}X^\varepsilon}{X^{1/3}} \sum_{n \mid q} \frac{1}{n^{1/2}} \ll \frac{\mathsf{M}_0^{5/12}X^\varepsilon}{\mathcal{Q}} \sum_{n \mid q} \frac{q}{n^{1/2}}. 
\end{align}
Now we estimate the sum $\mathcal{S}_E$ as follows.
\begin{align*}
  \mathcal{S}_{E}(X,Y) &\ll \frac{X^{7/3+\varepsilon}Y}{\mathcal{Q}} \mathop{\sum}_{ m_2 \ll RX^{1+\varepsilon}/Y } \sum_{ q \ll \mathcal{Q}^{1-2\varepsilon}} \frac{1}{q^4} \sum_{n \mid q} \ \sideset{}{^\star}\sum_{\beta \textrm{ mod } \frac{q}{n}} \ \sideset{}{^\star}\sum_{a \textrm{ mod } q} e\left(\frac{\overline{a}\beta }{q/n}\right)  \\
  &\hspace{1cm}\times  \mathfrak{C}(...)\sum_{\pm}  \; 
     \sum_{ m \ll \mathsf{M}_0/n^2} \sum_{m_1 \asymp \frac{(n^2m)^{1/3}}{X^{1/3}}} \frac{B(n,m)}{n^{-1/3} m^{1/3}} e\left(\frac{\pm m \overline{\beta}}{q/n}\right)\mathcal{W}^{\pm}(...)  \\ 
     &+ \frac{X^{7/3+\varepsilon}Y}{\mathcal{Q}} \sum_{m_1 \ll RX^\varepsilon} \mathop{\sum}_{ m_2 \ll RX^{1+\varepsilon}/Y } \sum_{ q \gg \mathcal{Q}^{1-2\varepsilon}} \frac{1}{q^4} \sum_{n \mid q} \ \sideset{}{^\star}\sum_{\beta \textrm{ mod } \frac{q}{n}} \ \sideset{}{^\star}\sum_{a \textrm{ mod } q} e\left(\frac{\overline{a}\beta }{q/n}\right) \\
     &\hspace{1cm}\times \mathfrak{C}(...) \sum_{\pm}  \sum_{ m \ll \mathsf{M}_0/n^2} \frac{B(n,m)}{n^{-1/3} m^{1/3}} e\left(\frac{\pm m \overline{\beta}}{q/n}\right)\mathcal{W}^{\pm}(...) \\
     &+ \frac{X^{7/3+\varepsilon}Y}{\mathcal{Q}} \mathop{\sum}_{ m_2 \ll RX^{1+\varepsilon}/Y } \sum_{1 \leqslant q \leqslant \mathcal{Q}} \frac{1}{q^4} \sum_{n \mid q} \ \sideset{}{^\star}\sum_{\beta \textrm{ mod } \frac{q}{n}} \ \sideset{}{^\star}\sum_{a \textrm{ mod } q} e\left(\frac{\overline{a}\beta }{q/n}\right) \mathfrak{C}(...) \\
     &\hspace{1cm}\times  \sum_{\pm}  \sum_{ m \ll \mathsf{M}_0/n^2} \frac{B(n,m)}{n^{-1/3} m^{1/3}} e\left(\frac{\pm m \overline{\beta}}{q/n}\right)\mathcal{W}^{\pm}(...).
\end{align*}
Using \autoref{bound S1} and equations \autoref{bound after partial sum case1 } and \autoref{bound after partial sum case2 }, we get
\begin{align*}
   \mathcal{S}_E(X,Y) &\ll \frac{X^{7/3+\varepsilon}Y}{\mathcal{Q}} \frac{R^2 X}{Y} \Biggl[ \ \sum_{ \substack{q \ll \mathcal{Q}^{1-2\varepsilon} \\ q=q_1q_2}} \frac{1}{q^4} \sum_{n \mid q} \frac{1}{n^{1/2}} \frac{q^{3/2}}{X^{13/12}} \frac{q^2 q_1 }{n} \\
   &\hspace{0.2cm}+ \frac{\mathsf{M}_0^{5/12}}{\mathcal{Q}} \sum_{ \substack{q \gg \mathcal{Q}^{1-2\varepsilon} \\ q=q_1q_2}} \frac{1}{q^3} \sum_{n \mid q} \frac{1}{n^{1/2}} \frac{q^2 q_1}{n} + \sum_{\substack{1 \leqslant q \leqslant \mathcal{Q} \\ q=q_1q_2}} \frac{1}{q^4} \sum_{n \mid q} \frac{1}{n^{1/2}} \frac{q^{3/2}}{X^{13/12}} \frac{q^2q_1}{n} \Biggr]\\
   &\ll \frac{X^{10/3+\varepsilon}R^2}{\mathcal{Q}} \Biggl[ \frac{1}{X^{13/12}} \sum_{\substack{q_1 \ll \mathcal{Q}^{1-2\varepsilon} \\ q_1 \mid (2 |\mathbf{A}|)^\infty}} q_1^{1/2} \sum_{n \mid q_1} \frac{1}{n^{3/2}} \sum_{q_2 \ll \mathcal{Q}^{1-2\varepsilon}/q_1 } \frac{1}{q_2^{1/2}} \\
   &+ \frac{\mathsf{M}_0^{5/12}}{\mathcal{Q}} \sum_{\substack{ \mathcal{Q}^{1-2\varepsilon} \ll q_1 \leqslant \mathcal{Q}_1 \\ q_1 \mid (2 |\mathbf{A}|)^\infty}} \sum_{n \mid q_1} \frac{1}{n^{3/2}} \sum_{\mathcal{Q}^{1-2\varepsilon}/q_1 \ll q_2 \leqslant \mathcal{Q}/q_1} \frac{1}{q_2} \Biggr]\\
   &\ll \frac{X^{9/4+\varepsilon}R^2}{\mathcal{Q}} \sum_{\substack{q_1 \ll \mathcal{Q}^{1-2\varepsilon} \\ q_1 \mid (2 |\mathbf{A}|)^\infty}} q_1^{1/2} \sum_{n \mid q_1} \frac{1}{n^{3/2}} \frac{\mathcal{Q}^{1/2}}{q_1^{1/2}} + \frac{X^{10/3+\varepsilon} \mathsf{M}_0^{5/12} R^2}{\mathcal{Q}^2}.
\end{align*}
In the last step, the sum over $q_1$ is bounded as it runs over divisors of $(2n|\mathbf{A}|)^\infty$. Also, recall that $\mathsf{M}_0= q^3/X^2 \ll X$ and $\mathcal{Q}=X$. Finally, we have the following bound for the sum $\mathcal{S}_E$
\begin{equation} \label{Bound S}
    \mathcal{S}_E(X,Y) \ll X^{7/4+\varepsilon} R^2. 
\end{equation}
This proves \autoref{GL3 FC}.
\subsection{Simplification of \texorpdfstring{$\mathcal{S}_M(X,Y)$}{}} \label{main term section}
The sum $\mathcal{S}_M(X,Y)$ will survive only in the case of \autoref{d3 thm}. In which case, we can write \autoref{SXY main} as the sum of the following two sums.
\begin{equation} \label{SM(X,Y)}
    \mathcal{S}_{M}(X,Y) = \mathcal{S}_{main}(X,Y) + \mathcal{S}_{err}(X,Y),
\end{equation}
where 
\begin{equation} \label{S main}
    \mathcal{S}_{main}(X,Y) = \frac{XY}{2\mathcal{Q}} \sum_{1 \leqslant q \leqslant \mathcal{Q}} \frac{1}{q^5} \int_{\mathbb{R}} \psi(q,x) U(x) \mathfrak{C}_M(0,0,x;q) \mathfrak{J}(0,0,x,q) dx,
\end{equation}
and 
\begin{multline} \label{S error}
    \mathcal{S}_{err}(X,Y) = \frac{XY}{2\mathcal{Q}} \sum_{1 \leqslant q \leqslant \mathcal{Q}} \frac{1}{q^5} \mathop{\sum_{m_1 \ll RX^\varepsilon} \sum_{m_2 \ll RX^{1+\varepsilon}/Y}}_{m_1^2+m_2^2 \neq 0} \ \\
     \times \int_{\mathbb{R}} \psi(q,x) U(x) \mathfrak{C}_M(m_1,m_2,x;q) \mathfrak{J}(m_1,m_2,x,q) dx.
\end{multline}
Here
\begin{align*}
    \mathfrak{C}_M(m_1,m_2,x,q) &= \sideset{}{^\star}\sum_{a \textrm{ mod } q} \mathfrak{C}(m_1,m_2,a;q) \Tilde{\mathcal{V}}(v_x,a,q) \notag\\
    &= \Tilde{v_x}(1) \sum_{n \mid q} n d(n) P_2(n,q) \mathfrak{C}_1(m_1,m_2,n;q) \notag\\ 
    &\hspace{1.5cm}+ \Tilde{v_x}^\prime(1) \sum_{n \mid q} n d(n) P_1(n,q) \mathfrak{C}_1(m_1,m_2,n;q) \notag\\
    &\hspace{3cm}+ \Tilde{v_x}^\dblprime(1) \sum_{n \mid q} n d(n) P_0(n,q) \mathfrak{C}_1(m_1,m_2,n;q),
\end{align*}
with \begin{equation} \label{C1 main CS}
     \mathfrak{C}_1(m_1,m_2,n;q) = \sideset{}{^\star}\sum_{a \textrm{ mod } q} \mathfrak{C}(m_1,m_2,a;q) S\lr{\ol{a},0;\frac{q}{n}}.
    \end{equation}
    
\noindent Also, recall from equations \autoref{P1} and \autoref{P2}, 
\begin{equation*}
    P_j(n,q) \ll (\log(n+2)(q+2))^j, \ \ j=0,1,2.
\end{equation*}
The following lemma gives us a bound for the character sum present in \autoref{C1 main CS}.

\medskip

\begin{lemma} \label{CS C2 main}
We have 
\begin{equation*}
    \mathfrak{C}_1(m_1,m_2,n;q) \ll \begin{cases}
        q (q,Q^*(m_1,m_2)) & \textrm{ if } m_1^2+m_2^2 \neq 0 \\
        \frac{q^2 q_1}{n} d(q_1)d(q_2) & \textrm{ if } m_1=m_2=0
    \end{cases}
\end{equation*}
where $Q^*$ is the adjoint quadratic form of $Q$ and $q=q_1q_2$ with $q_1 \mid (2n|\mathbf{A}|)^\infty$ and $(q_2,2n|\mathbf{A}|q_1)=1$.
\end{lemma}
\begin{proof}
    Substituting the expression for $\mathfrak{C}$ from \autoref{char poisson}, we get
    \begin{align*}
        \mathfrak{C}_1(...) &= \starsum_{a \textrm{ mod } q} \ \starsum_{\beta \textrm{ mod } \frac{q}{n}} e\lr{\frac{\ol{a}\beta}{q/n}} \\
        &\hspace{3cm}\times \sum_{\alpha_1 \textrm{ mod } q} \sum_{\alpha_2 \textrm{ mod } q}  e\lr{\frac{-\ol{a}(Q(\alpha_1,\alpha_2) -m_1\alpha_1 - m_2 \alpha_2)}{q}}. 
    \end{align*}
    Using \autoref{gauss sum}, assuming $q$ to be odd as for even $q$ we will get similar expression, we obtain
    \begin{align*}
        \mathfrak{C}_1(...) &= q \epsilon_q^2 \lr{\frac{|\mathbf{A}|}{q}} \starsum_{a \textrm{ mod } q}  \ \starsum_{\beta \textrm{ mod } \frac{q}{n}} e\lr{\frac{\ol{a}\beta}{q/n}} e\lr{\frac{\ol{a}Q^*(m_1,m_2)}{q}} \\
        &= q \varepsilon_q^2 \lr{\frac{|\mathbf{A}|}{q}} \mu\lr{\frac{q}{n}} 
            \starsum_{a \textrm{ mod } q} e\lr{\frac{\ol{a}Q^*(m_1,m_2)}{q}}.
    \end{align*}
When $m_1=m_2=0$, we proceed as in \autoref{bound S1}. Writing $q=q_1q_2$ with $q_1 \mid (2n|\mathbf{A}|)^\infty$ and $(q_2,2n|\mathbf{A}|q_1)=1$ then splitting the sum over $a, \beta, \alpha_1$ and $\alpha_2$ we get
    \begin{multline*}
        \mathfrak{C}_1(...)= \starsum_{a_1 \textrm{ mod } q_1} \ \starsum_{\beta_1 \textrm{ mod } \frac{q_1}{n}} \sum_{\alpha_1 \textrm{ mod } q_1} \sum_{\alpha_2 \textrm{ mod } q_1} e\lr{\frac{\ol{a_1 q_2} \beta_1}{q_1/n}} e\lr{\frac{-\ol{a_1 q_2} Q(\alpha_1,\alpha_2)}{q_1}} \\ \times \starsum_{a_2 \textrm{ mod } q_2} \ \starsum_{\beta_2 \textrm{ mod } q_2} \sum_{\alpha_1^\prime \textrm{ mod } q_2} \sum_{\alpha_2^\prime \textrm{ mod } q_2} e\lr{\frac{\ol{a_2 (q_1/n)}\beta_2}{q_2}} e\lr{\frac{-\ol{a_2 q_1}Q(\alpha_1^\prime,\alpha_2^\prime) }{q_2}}.
    \end{multline*}
Analogous to the proof of \autoref{bound S1}, we have the sums mod $q_1$ in the first line above are bounded by $q_1^3d(q_1)/n$ and the sums in the second line of the above equation are bounded by $q_2^2 d(q_2)$. Hence, we get the desired bound.    
\end{proof}

\noindent Now, to get a bound for $\mathcal{S}_{err}$, we have to analyze the integrals present in it. Recall that
\begin{multline*}
    \mathcal{S}_{err}(X,Y) = \frac{XY}{2\mathcal{Q}} \sum_{1\leqslant q \leqslant \mathcal{Q}} \frac{1}{q^5} \mathop{\sum_{m_1 \ll RX^\varepsilon} \sum_{m_2 \ll RX^{1+\varepsilon}/Y}}_{m_1^2+m_2^2 \neq 0} \sum_{n \mid q} n d(n) \mathfrak{C}_1(m_1,m_2,n;q) \\
    \times \sum_{j=0}^2 P_{2-j}(n,q) \int_{\mathbb{R}} \psi(q,x) U(x) \int_0^\infty V\lr{\frac{y}{X^2}} e\lr{\frac{yx}{q\mathcal{Q}}} (\log{y})^j \mathfrak{J}(m_1,m_2,x,q) dx dy.
\end{multline*}
As done earlier for $\mathcal{W}^\pm$, the $x$-integral gives us the upper bound $qX^\varepsilon/\mathcal{Q}$. If $m_1$ or $m_2$ is non-zero, we use the first derivative bound (see \autoref{exponential sum}) in $u$ or $v$ integral and treat the other one trivially to obtain the bound $R q^2X^\varepsilon/\mathcal{Q}^2$. Now, using the change of variable $y \rightarrow yX^2$, \autoref{CS C2 main}, and the above bound for integrals. We have the following estimate.
\begin{align} \label{Serr}
    \mathcal{S}_{err}(X,Y) &\ll \frac{X^{3+\varepsilon}Y}{\mathcal{Q}} \sum_{1 \leqslant q \leqslant \mathcal{Q}} \frac{1}{q^5} \mathop{\sum_{m_1 \ll RX^\varepsilon} \sum_{m_2 \ll RX^{1+\varepsilon}/Y}}_{m_1^2+m_2^2 \neq 0} q(q,Q^*(m_1,m_2)) \frac{R q^2}{\mathcal{Q}^2} \notag\\
    &\ll \frac{X^{3+\varepsilon}Y R}{\mathcal{Q}^3}  \mathop{\sum_{m_1 \ll RX^\varepsilon} \sum_{m_2 \ll RX^{1+\varepsilon}/Y}}_{m_1^2+m_2^2 \neq 0} \sum_{1 \leqslant q \leqslant \mathcal{Q}} \frac{(q,Q^*(m_1,m_2))}{q^2} \notag\\
    &\ll \frac{X^{3+\varepsilon}Y R}{\mathcal{Q}^3} \frac{R^2X}{Y} =X^{1+\varepsilon}R^3. 
\end{align}
\subsubsection{Main Term} 
Consider the expression for $\mathcal{S}_{main}$ from \autoref{S main}.
\begin{multline*}
    \mathcal{S}_{main}(X,Y) = \frac{X^3Y}{2 \mathcal{Q}} \sum_{1 \leqslant q \leqslant \mathcal{Q}} \frac{1}{q^5} \sum_{n \mid q} n d(n) \mathfrak{C}_1(0,0,n;q) \\
    \hspace{2cm}\times \sum_{j=0}^2 P_{2-j}(n,q)  \int_{\mathbb{R}} \psi(q,x) U(x) \int_0^\infty V\lr{y} (\log{yX^2})^j e\lr{\frac{xyX^2}{q\mathcal{Q}}} dy \\
    \times \mathop{\iint}_{\mathbb{R}^2} W_1(u) W_2(v) e\left(\frac{-x Q(uX,vY)}{q\mathcal{Q}} \right) du dv dx.
\end{multline*}
We can replace the weight function $V$ by $1$. To do this, let
\begin{equation*}
    \mathscr{V}^{M,j}(x) = \int_{1/2}^{5/2} (\log{yX^2})^j e\lr{\frac{xyX^2}{q\mathcal{Q}}} dy \quad \ \ j=0,1,2;
\end{equation*}
we need to estimate the remainder terms from 
\begin{equation*}
    \mathscr{V}^{\sharp,j}(x) = \Tilde{v}_x^{(j)}(1) - \mathscr{V}^{M,j}(x).
\end{equation*}
Write correspondingly 
\begin{equation*}
    \mathscr{S}_j^{\sharp}(X) = \mathscr{S}_j(X) - \mathscr{S}_j^M(X).
\end{equation*}
Thus, we have
\begin{equation} \label{S main into 3 sums}
    \mathcal{S}_{main}(X,Y) = \sum_{j=0}^2 \mathscr{S}_j(X).
\end{equation}
Notice that
\begin{equation*}
    \mathcal{V}^{\sharp,j}(x) = \int_{1/2}^{5/2} \lr{V\lr{y} -1} (\log{yX^2})^j e\lr{\frac{xyX^2}{q\mathcal{Q}}} dy \ll X^\varepsilon,
\end{equation*}
and using integration by parts we get that the integral is negligible unless $|x| \ll qX^\varepsilon/\mathcal{Q}$.
With this, we now consider
\begin{align} \label{S0sharp}
    \mathscr{S}_0^\sharp(X) &:= \frac{X^3Y}{2 \mathcal{Q}} \sum_{1 \leqslant q \leqslant \mathcal{Q}} \frac{1}{q^5} \sum_{n \mid q} n d(n) \mathfrak{C}_1(0,0,n;q) P_2(n,q)\notag\\
    &\hspace{1cm}\times \int_{\mathbb{R}} \psi(q,x) U(x) \mathscr{V}^{\sharp,j}(x) \mathop{\iint}_{\mathbb{R}^2} W_1(u) W_2(v) e\left(\frac{-x Q(uX,vY)}{q\mathcal{Q}} \right) du dv dx\notag \\
    &\ll  \frac{X^{3+\varepsilon}Y}{\mathcal{Q}}  \sum_{1 \leqslant q_2 \leqslant \mathcal{Q}} \frac{1}{q_2^5} \sum_{\substack{q_1 \leqslant \frac{\mathcal{Q}}{q_2} \\ q_1 \mid (2|\mathbf{A}|)^\infty}} \frac{1}{q_1^5} \sum_{n \mid q_1} n \frac{q^2 q_1}{n} \frac{R^2 q^2}{\mathcal{Q}^2} \notag\\
    &\ll \frac{X^{3+\varepsilon} Y R^2}{\mathcal{Q}^3} \sum_{q_2 \leqslant \mathcal{Q}} \frac{1}{q_2} \sum_{\substack{q_1 \leqslant \frac{\mathcal{Q}}{q_2} \\ q_1 \mid (2|\mathbf{A}|)^\infty}} 1  \ll X^\varepsilon Y R^2.
\end{align}
In the second last step above, we have used the first derivative bound for $u$ and $v$ integrals along with the restriction on $x$ integral.
Analogously, we have
\begin{equation} \label{S12sharp}
    \mathscr{S}_1^\sharp(X) \ll X^\varepsilon Y R^2 \quad \textrm{ and } \quad \mathscr{S}_2^\sharp(X) \ll X^\varepsilon Y R^2.
\end{equation}
Next, we have
\begin{align*}
    \mathscr{S}_j^M(X) &:= \frac{X^3Y}{2\mathcal{Q}} \sum_{1 \leqslant q \leqslant \mathcal{Q}} \frac{1}{q^5} \sum_{n \mid q} n d(n) \mathfrak{C}_1(0,0,n;q) P_{2-j}(n,q)\\
    &\hspace{1cm}\times \int_{\mathbb{R}} \psi(q,x) U(x) \mathscr{V}^{M,j}(x) \mathop{\iint}_{\mathbb{R}^2} W_1(u) W_2(v) e\left(\frac{-x Q(uX,vY)}{q\mathcal{Q}} \right) du dv dx \\
    &=\frac{X^3Y}{2\mathcal{Q}} \sum_{1 \leqslant q \leqslant \mathcal{Q}} \frac{1}{q^5} \ \sum_{n \mid q} n d(n) \mathfrak{C}_1(0,0,n;q) P_{2-j}(n,q) \\
    &\hspace{2cm}\times \int_{\mathbb{R}} \psi(q,x) U(x) \int_{1/2}^{5/2} (\log{yX^2})^j e\lr{\frac{xyX^2}{q\mathcal{Q}}} dy \\ 
    &\hspace{4cm}\times \mathop{\iint}_{\mathbb{R}^2} W_1(u) W_2(v) e\left(\frac{-x Q(uX,vY)}{q\mathcal{Q}} \right) du dv dx.
\end{align*}
Recall from \autoref{delta}, for small $q$, $\psi(q,x)$ can be replaced by $1$ at the cost of negligible error. We obtain
\begin{multline*}
    \mathscr{S}_j^M(X) = \frac{XY}{2\mathcal{Q}} \sum_{q=1}^\infty \frac{1}{q^5} \ \sum_{n \mid q} n d(n) \mathfrak{C}_1(0,0,n;q) P_{2-j}(n,q) \\
    \hspace{0.2cm}\times \int_{\mathbb{R}} U\lr{x} \int_{X^2/2}^{5X^2/2} (\log{y})^j e\lr{\frac{xy}{q\mathcal{Q}}} dy  \mathop{\iint}_{\mathbb{R}^2} W_1(u) W_2(v) e\left(\frac{-x Q(uX,vY)}{q\mathcal{Q}} \right) du dv dx,
\end{multline*}
up to some negligible error. Since 
\begin{align*}
    &\frac{XY}{2\mathcal{Q}} \sum_{ q \gg \mathcal{Q}^{1-2\varepsilon}} \frac{1}{q^5} \ \sum_{n \mid q} n d(n) \mathfrak{C}_1(0,0,n;q) P_{2-j}(n,q) \\
   &\hspace{0.2cm}\times \int_{\mathbb{R}} \psi(q,x) U\lr{x} \int_{X^2/2}^{5X^2/2} (\log{y})^j e\lr{\frac{xy}{q\mathcal{Q}}} dy \mathop{\iint}_{\mathbb{R}^2} W_1(u) W_2(v) e\left(\frac{-x Q(uX,vY)}{q\mathcal{Q}} \right) du dv dx \\
    &\ll \frac{X^{3+\varepsilon}Y}{\mathcal{Q}} \sum_{\substack{q_1 \gg \mathcal{Q}^{1-2\varepsilon} \\ q_1 \mid (2|\mathbf{A}|)^\infty}} \frac{1}{q_1^5} \sum_{n \mid q_1} n \ q_1^2 \sum_{q_2 \gg \frac{\mathcal{Q}^{1-2\varepsilon}}{q_1}} \frac{1}{q_2^5} q_2\phi(q_2) \frac{q_1q_2}{\mathcal{Q}} \\
   &\ll \frac{X^{3+\varepsilon}Y}{\mathcal{Q}^2} \sum_{\substack{q_1 \gg \mathcal{Q}^{1-2\varepsilon} \\ q_1 \mid (2|\mathbf{A}|)^\infty}} \frac{1}{q_1}  \sum_{q_2 \gg \frac{\mathcal{Q}^{1-2\varepsilon}}{q_1}} \frac{\phi(q_2)}{q_2^3} \\
   &\ll \frac{X^{3+\varepsilon}Y}{\mathcal{Q}^3} \ll X^\varepsilon Y.
\end{align*}
Make the change of variable $x \rightarrow xq/X$ and using
\begin{equation} \label{X removal from main}
    e\lr{\frac{-xQ(uX,vY)}{\mathcal{Q}^2}} = e\lr{-xQ(u,v)}e\lr{-x\lr{Bv^2 \lr{\frac{Y^2}{X^2}-1} +Cuv\lr{\frac{Y}{X}-1}}}, 
\end{equation}
we arrive at the following expression.
\begin{multline*}
    \mathscr{S}_j^M(X) = \frac{X^2Y}{2\mathcal{Q}} \sum_{q =1}^\infty \frac{1}{q^4} \ \sum_{n \mid q} n d(n) \mathfrak{C}_1(0,0,n;q) P_{2-j}(n,q) \\
    \times \int_{\mathbb{R}} \int_{1/2}^{5/2} (\log{yX^2})^j e\lr{xy} dy  \mathop{\iint}_{\mathbb{R}^2} W_1(u) W_2(v) e\left(-xQ(u,v) \right) du dv dx +\mathcal{O}_c(X^{-c})
\end{multline*}
for any $c>1$. Here, we have used the fact that the second factor on the right side of \autoref{X removal from main} has no oscillations and hence it can be absorbed to the smooth weight function. Furthermore, the weight function can be replaced by $1$ as before. Using the properties of the logarithm function, we can further simplify the main term. Therefore, for any $c>1$, we have
\begin{multline} \label{SMj}
   \mathscr{S}_j^M(X) = \frac{X^2Y}{2\mathcal{Q}} \sum_{q =1}^\infty \frac{1}{q^4} \ \sum_{n \mid q} n d(n) \mathfrak{C}_1(0,0,n;q) P_{2-j}(n,q) \\
   \times \sum_{k=0}^j \binom{j}{k} 2^{j-k} (\log{X})^{j-k} \mathcal{J}_k
   +\mathcal{O}_c(X^{-c}),
\end{multline}
where
\begin{equation} \label{Jk}
    \mathcal{J}_k := \int_{\mathbb{R}} \int_{1/2}^{5/2} (\log{y})^k e\lr{xy} dy  \mathop{\iint}_{\mathbb{R}^2} W_1(u) W_2(v) e\left(-xQ(u,v) \right) du dv dx.
\end{equation}
Combining the results from equations \autoref{S main into 3 sums}, \autoref{S0sharp}, \autoref{S12sharp}, and \autoref{SMj}, we obtain the following lemma.

\medskip

\begin{lemma} \label{lemma main term}
    For any $c>1$, the main term of \autoref{d3 thm} is given as
\begin{multline*}
    \mathcal{S}_{main}(X,Y) = 2XY(\log{X})^2 \mathcal{C}_0 \mathcal{J}_0 + XY \log{X} (\mathcal{C}_1 \mathcal{J}_0 + 2 \mathcal{C}_0 \mathcal{J}_1) \\+ \frac{1}{2}XY (\mathcal{C}_2\mathcal{J}_0 + \mathcal{C}_1\mathcal{J}_1 + \mathcal{C}_0 \mathcal{J}_2) +\mathcal{O}_c(X^{-c}),
\end{multline*}
where 
\begin{multline*}
    \mathcal{C}_j := \sum_{q =1}^\infty \frac{1}{q^4} \sum_{n \mid q} n d(n) P_j(n,q) \starsum_{a \textrm{ mod } q} \sum_{\alpha_1 \textrm{mod } q} \sum_{\alpha_2 \textrm{mod } q} e\lr{\frac{-aQ(\alpha_1,\alpha_2)}{q}} \\ \times S\lr{\ol{a},0;\frac{q}{n}}, 
\end{multline*}
and $\mathcal{J}_j's$ are integrals defined in \autoref{Jk}.
\end{lemma}

\medskip

\noindent Finally, we have proved the asymptotic formula stated in  \autoref{d3 thm} after combining the results of the above lemma and \autoref{Serr}, together with the bound in \autoref{Bound S}.

\medskip
\noindent{\bf Acknowledgment:} We express our gratitude to the Department of Mathematics and Statistics at the Indian Institute of Technology Kanpur, India, for providing a conducive research environment. H. Chanana is grateful for the support received through the University Grants Commission, Government of India (UGC-JRF/SRF). The authors would like to thank the anonymous referees for their careful reading and valuable comments, which improved the presentation of the paper. 

\medskip
\printbibliography

\end{document}